\theoremstyle{plain}
\newtheorem{theorem}{Theorem}[section]
\newtheorem{lemma}[theorem]{Lemma}
\newtheorem{proposition}[theorem]{Proposition}
\theoremstyle{definition}
\theoremstyle{definition}
\newtheorem{remark}[theorem]{Remark}
\newcommand{\NN}{\mathbb{N}}
\newcommand{\RR}{\mathbb{R}}
\newcommand{\PP}{\mathbb{P}}
\newcommand{\CC}{\mathbb{C}}
\newcommand{\FF}{\mathbb{F}}
\newcommand{\EE}{\mathbb{E}}
\newcommand{\cL}{\mathcal{L}}
\newcommand{\dd}{\mathrm{d}}
\newcommand*\diff{\mathop{}\!\mathrm{d}}
\newcommand{\abs}[1]{\left\vert#1\right\vert}
\renewcommand{\tilde}{\widetilde}
\newcommand\normal{\color{black}}
\begin{document}
	\title{On moments of downward passage times for spectrally negative Lévy processes} 
	\author{Anita Behme\thanks{Technische Universit\"at
			Dresden, Institut f\"ur Mathematische Stochastik, Fakultät Mathematik, 01062 Dresden, Germany, \texttt{anita.behme@tu-dresden.de} and \texttt{philipp.strietzel@tu-dresden.de}, phone: +49-351-463-32425, fax:  +49-351-463-37251.}\; and Philipp Lukas Strietzel$^\ast$}
	\date{\today}
	\maketitle
	
	\vspace{-1cm}
	\begin{abstract} 
 	The existence of moments of first downward passage times of a spectrally negative Lévy process is governed by the general dynamics of the Lévy process, i.e. whether the Lévy process is drifting to $+\infty$, $-\infty$ or oscillates. Whenever the Lévy process drifts to $+\infty$, we prove that the $\kappa$-th moment of the first passage time (conditioned to be finite) exists if and only if the $(\kappa+1)$-th moment of the Lévy jump measure exists. This generalises a result shown earlier by Delbaen for Cramér-Lundberg risk processes \cite{Delbaen1990}. Whenever the Lévy process drifts to $-\infty$, we prove that all moments of the first passage time exist, while for an oscillating Lévy process we derive conditions for non-existence of the moments and in particular we show that no integer moments exist. 
	\end{abstract}
	
	2020 {\sl Mathematics subject classification.} 60G51, 
	60G40  
	(primary), 
	 91G05 
	(secondary) \\

	\normal	
	
	{\sl Keywords:} conjugate subordinator; Cramér-Lundberg risk process; exit time; fluctuation theory; first hitting time; fractional calculus; moments; ruin theory; spectrally negative Lévy process; subordinator; time to ruin
	

	\section{Introduction}\label{S0}
	\setcounter{equation}{0}

Let $X=(X_t)_{t\geq 0}$ be a spectrally negative Lévy process, i.e. a L\'evy process that does not exhibit positive jumps, starting in zero. In this article we study moments of the \emph{first (downward) passage time of $-x$}, $x\geq 0$, of the process $X$, i.e. moments of
\begin{equation} \label{eq-firstpassage}	
	\tau_x^- := \inf \left\{ t>0: ~ X_t<-x \right\},
\end{equation}
conditioned on finiteness of this stopping time.

The first passage time $\tau_x^-$ - sometimes also referred to as \emph{exit time} - of (spectrally negative) L\'evy processes is a well-known object that has been studied by many authors, see \cite[Sec. 9.5]{DoneyBuch} for a general overview. However, most results are limited on giving a representation of the Laplace transform of the first passage time.\\ 
In case of a Brownian motion with drift $p\in\mathbb{R}$, due to the continuity of the paths, the first passage time $\tau_x^-$ coincides with the \emph{first hitting time} of $-x$, i.e. with  $\tau_x^{-,\ast} = \inf\{t> 0: X_t=-x\}$. In this special case, $\tau_x^-$ is known to have Laplace transform, cf. \cite[Eq. I.(9.1)]{rogerswilliams1},
\begin{equation}\label{eq_BrownianmotionLaplace}
	\EE[e^{-q \tau_x^-}]  = e^{-(\sqrt{p^2+2q}+p) x}, \quad x\geq 0, q>0, 
\end{equation}
and its distribution is given explicitly as, cf. \cite[Eq. I.(9.2)]{rogerswilliams1},
$$\PP(\tau_x^- \in \dd z)= \frac{x}{\sqrt{2\pi z^3}}e^{- \frac{(x+pz)^2}{2z}} \dd z,\quad x,z\geq 0, $$
where in both formulas we assumed the process to be standardized, i.e. such that $\sigma^2=1$. \\
For general spectrally negative Lévy processes the first hitting time and the first passage time can be related via the undershoot $U_x:=X_{\tau_x^-}+x\leq 0$ as shown in \cite{Doney1991}.\\ In particular, for spectrally negative $\alpha$-stable processes ($1<\alpha<2$) due to self-similarity this relation reads, cf. \cite{Simon2011},
\begin{equation} \label{eq-relation}
 \tau_x^{-,\ast} 
 \overset{d}= \tau_x^- + |U_x|^\alpha \hat{\tau}_1^{+,\ast}= x^\alpha (\tau_1^- + |U_1|^\alpha \hat{\tau}_1^{+,\ast})\end{equation}
where 
 $\hat{\tau}_x^{+,\ast}$ is an independent copy of the first upwards hitting time $\tau_x^{+,\ast}=\inf\{t>0: X_t=x\}$. 
The hitting time $\tau_x^{-,\ast}$ of a spectrally one-sided stable process has been studied e.g.  in \cite{Peskir2008, Simon2011, KuznetsovKyprianou2014}. In particular, in  \cite{Simon2011} fractional moments and a series representation of the density $\tau_x^{-,\ast}$ are provided. 

The first downward passage time $\tau_x^-$ has also been extensively studied in the field of actuarial mathematics where the spectrally negative L\'evy process $X$ is interpreted as \emph{risk process} and shifted to start in $x\geq 0$. Then, due to the space homogeneity of the Lévy process, $\tau_x^-$ coincides with the \emph{time of ruin}, i.e. the first time the process passes the value zero. The most prominent example for such a risk process is the classical \emph{Cram\'er-Lundberg model}, where $X$ is chosen to be a spectrally negative compound Poisson process, i.e.
\begin{equation}\label{eq-CLmodel}
	X_t=x+pt - \sum_{i=1}^{N_t} S_i, \quad t\geq 0.
\end{equation}
Hereby $x\geq 0$ is interpreted as \emph{initial capital}, $p>0$ denotes a constant \emph{premium rate}, the Poisson process $(N_t)_{t\geq 0}$ represents the \emph{claim counting process}, and the i.i.d. positive random variables $\{S_i, i\in\NN\}$ are the \emph{claim size variables} which are independent of $(N_t)_{t\geq 0}$.\\
For this model, under the profitability assumption $\EE[X_1]>0$, it is shown  in \cite{Delbaen1990} for all $\kappa>0$ that the $\kappa$-th moment of the ruin time exists, if and only if the $(\kappa+1)$-th moment of the claim size distribution exists. In this paper, we generalize this result to arbitrary spectrally negative L\'evy processes. Note that, while the proof given in \cite{Delbaen1990} relies on results on the speed of convergence of random walks, we use a completely different approach here via fractional differentiation of Laplace transforms. In particular, our approach allows us to relate the existence of $\EE[(\tau_x^-)^\kappa|\tau_x^-<\infty]$ with the existence of the $\kappa$-th moment of the subordinator $(\tau_x^+)_{x\geq 0}$ of upwards passage times $\tau_x^+=\inf\{t>0: X_t>x\}$ at a specific random time. As a by-product, we show that $(\tau_x^+)_{x\geq 0}$ is a special subordinator and identify its conjugate subordinator.

Before presenting and proving our main theorem on the existence of moments of the first passage time in Section \ref{S2}, we collect various preliminary results on (spectrally negative) L\'evy processes and fractional derivatives in Section \ref{S1}. 

	\section{Preliminaries}\label{S1}
	\setcounter{equation}{0}

Throughout this article let $X=(X_t)_{t\geq 0}$ be a Lévy process, i.e. a càdlàg stochastic process with independent and stationary increments, defined on a filtered  probability space $(\Omega,\mathcal{F}, \FF, \mathbb{P})$. 
It is well-known that the L\'evy process $X$ is fully characterized by its \textit{characteristic exponent} $\Psi$, which is defined via $e^{-t\Psi(\theta)} = \mathbb{E}[e^{i\theta X(t)}]$ and takes the form 
\begin{equation*}
\Psi(\theta) = 
 i a\theta + \frac{1}{2}\sigma^2 \theta^2 + \int_{\mathbb{R}}\left(1-e^{i\theta y} + i\theta y\mathds{1}_{\{\abs{y}<1\}}\right)\Pi^*(\diff y),  \quad \theta \in \RR,
\end{equation*} 
for constants $a\in\mathbb{R}$, $\sigma^2\geq 0$, and a measure $\Pi^*$ on $\mathbb{R}\backslash\{0\}$ satisfying $\int_{\mathbb{R}}(1~\wedge~y^2) \Pi^*(\diff y)<\infty$. The measure $\Pi^*$ is called the \emph{L\'evy measure} or \emph{jump distribution} of $X$, while $(\sigma^2,a,\Pi^*)$ is the \emph{characteristic triplet} of $X$.  \\ 
If $X$ has no upwards jumps, i.e. if $\Pi^*((0,\infty))=0$, then $X$ is called  \emph{spectrally negative}. In this case, it is handy to use the \emph{Laplace exponent} $\psi(\theta) :=  \frac{1}{t}\log\mathbb{E}[e^{\theta X_t}]$, $\theta \geq 0$, of $-X$ instead of the characteristic exponent, which then can be written in the form 
\begin{equation}\label{eq-Laplaceexp}
\psi(\theta) =  c \theta + \frac{1}{2}\sigma^2 \theta ^2 + \int_{(0,\infty)} \left(e^{-\theta y}-1+\theta  y \mathds{1}_{\{y<1\}} \right)\Pi(\diff y), 
\end{equation}
where $c=-a\in\mathbb{R}$, $\sigma^2 \geq 0$, and  $\Pi(\diff y) = \Pi^*(-\diff y)$ is the mirrored version of the jump measure which is therefore defined on $(0,\infty)$. 

The Laplace exponent $\psi$ admits some useful properties: \\Clearly $\psi(0)=0$, and $\lim_{\theta\to\infty}\psi(\theta) =\infty$. On $(0,\infty)$ the function $\psi$ is infinitely often differentiable and strictly convex. Lastly, as $\psi$ is nothing else than the cumulant generating function of $X_1$, it carries information on the moments of $X$. In particular, it is well-known, cf. \cite[Cor. 25.8]{sato2nd}, that for any $\kappa>0$ 
\begin{equation}\label{Lemma_equivalence_momentenBedingungen}
	\EE[|X_1|^\kappa]< \infty \quad \text{if and only if } \quad \int_{|y|\geq 1} |y|^\kappa~\Pi(\diff y)<\infty,\end{equation}
and for $\kappa=k\in \NN_0$ this in turn implies
\begin{equation} \label{eq-momentLaplace} |\partial^k \psi(0+)|:= |\psi^{(k)}(0+)|<\infty.\end{equation}
Note that throughout this article $\partial_q^k f(q,z)$ denotes the $k$-th derivative of a function $f$ with respect to $q$, while $\partial_q:= \partial_q^1$. In case of only one parameter, we will usually omit the subscript.

We will also use the Laplace exponent's right inverse which we always denote by
\begin{equation*}
\Phi(q) := \sup\{\theta\geq 0 : ~ \psi(\theta) = q\}, \quad q\geq 0. \end{equation*}
From the mentioned properties of $\psi$ it follows immediately that 
\begin{align*}
	\Phi(0)=0 & \quad \text{if and only if } \quad \psi'(0+)\geq 0, \\
	 \text{while} \quad  \Phi(0) >0 & \quad \text{if and only if } \quad \psi'(0+)<0.
\end{align*}
The function $q\mapsto \Phi(q)$ is strictly monotone increasing on $[0,\infty)$, infinitely often differentiable on $(0,\infty)$, and it is the well-defined inverse of $\psi(\theta)$ on the interval $[\Phi(0),\infty)$, i.e. 
\begin{equation*}
	\Phi(\psi(\theta)) = \theta \quad \text{ and } \quad  \psi(\Phi(q)) = q, \qquad \forall \theta \in [\Phi(0),\infty), ~ q\geq 0.
\end{equation*} 
Thus applying the chain rule on $q\mapsto q=\psi(\Phi(q))$ immediately yields
\begin{equation} \label{Lemma_derivative_inverse}
	\Phi'(q)=\partial_q \Phi(q) = \frac{1}{\psi'(\Phi(q))}, \quad q\geq 0,
\end{equation}
where the case $q=0$ is interpreted in the limiting sense $q\downarrow 0$.\\
Finally note that by definition  
\begin{equation}\label{eq-limiteta}
\lim_{q\downarrow 0} \frac{q}{\Phi(q)} = \begin{cases} \psi'(0+), & \text{if }\psi'(0+)\geq 0 , \\ 0, &\text{else.} \end{cases}
\end{equation}
For proofs of the stated properties and a more thorough discussion of Lévy processes in general we refer to \cite{Kyprianou2014} and \cite{sato2nd}.  

As announced in the introduction, we are interested in the first downward  passage time $\tau_x^-$ of $-x$, $x\geq 0$, as defined in \eqref{eq-firstpassage}, or, more precisely, in the first passage time given that the process passes through $-x$, i.e.
\begin{equation}\label{eq-tauconditioned}
	\left(\tau_x^-|\tau_x^-<\infty\right).
\end{equation}
Note that in the case that $ \psi'(0+) = \mathbb{E}[X_1]\in [-\infty, 0]$ we have $\tau_x^-=(\tau_x^-|\tau_x^-<\infty)$ as $X$ enters the negative half-line almost surely. In the case $\psi'(0+)>0$ the term \emph{passage time} will be typically used for the conditioned quantity \eqref{eq-tauconditioned}.\\
To avoid trivialities we exclude the case that $X$ is a pure drift, which implies a deterministic first passage time. Hence we always have $\PP(\tau_x^-<\infty)>0$. Moreover, we exclude the hitting level $x=0$ whenever $X_t$ is of unbounded variation, as in this case $\tau_0^- =0$ almost surely.

To study $\tau_x^-$ (or $(\tau_x^-|\tau_x^-<\infty)$) we will use the concept of scale functions. Recall that for any $q\geq 0$ the \emph{$q$-scale function} $W^{(q)}\colon\mathbb{R}\to[0,\infty)$  of the spectrally negative Lévy process $X$ is the unique function such that for $x\geq 0$ its Laplace transform satisfies 
\begin{equation*}
	\int_0^\infty e^{-\beta x}W^{(q)}(x) \diff x = \frac{1}{\psi(\beta)-q},
\end{equation*}
for all $\beta >\Phi(q)$. For $x<0$ we set $W^{(q)}(x)=0$. Furthermore the \emph{integrated $q$-scale function} $Z^{(q)}\colon\mathbb{R}\to[0,\infty)$ is given by 
\begin{equation}\label{eq_Zq}
	Z^{(q)}(x) := 1+q\int_0^x W^{(q)}(y)\diff y,
\end{equation}
and it fulfills, cf. \cite[Thm. 8.1]{Kyprianou2014},
\begin{equation} \label{eq_Kyprianou_Scale}
	\mathbb{E}\left[e^{-q\tau_x^-} \mathds{1}_{\{\tau_x^- <\infty\}} \right] = Z^{(q)}(x) - \frac{q}{\Phi(q)} \cdot W^{(q)}(x), \quad x\in\RR, q\geq 0.
\end{equation}
Taking the limit $q\downarrow 0$ this immediately implies 
\begin{equation} \label{eq_Kyprianou_Ruin}
	\mathbb{P}(\tau_x^-<\infty)  =  1- (0 \vee \psi'(0+)) \cdot W^{(0)}(x), \quad x\in \RR,
\end{equation}
where we use the standard notation $\vee$ to denote the maximum.\\
Observe that the functions $q\mapsto W^{(q)}(x)$ and $q\mapsto Z^{(q)}(x)$ may be extended analytically to $\mathbb{C}$, which means especially that they are infinitely often differentiable with bounded derivatives on  every compact set $K\subset\mathbb{C}$. This especially implies that limits of type $q\downarrow 0$ exist.
Again, we refer to \cite{Kyprianou2014} for missing proofs and further details. For detailed accounts on scale functions and their numerous applications, we also refer to \cite{Avram2020} and \cite{kuznetsov2011}. 

Lastly, let us recall that fractional moments of non-negative random variables can be computed via fractional differentiation of the corresponding Laplace transform. More precisely, define for any $\kappa \in(0,1)$ the \emph{Marchaud fractional derivative} of a function $f(z), z\geq 0$, 
\begin{equation} \label{eq_definition_marchaud_derivative}
\mathbf{D}^\kappa_z f(z) = \frac{\kappa}{\Gamma (1-\kappa)} \int_{z}^\infty \frac{f(z)-f(u)}{(u-z)^{\kappa+1}} \diff u,
\end{equation}
cf. \cite[Eq. (5.58)]{Samko1993}, while for $\kappa\geq 1$ with $n:=\lfloor \kappa \rfloor$ denoting the largest integer smaller or equal to $\kappa$
\[\mathbf{D}^\kappa_z f(z) = \partial_z^n \mathbf{D}^{\kappa-n}_z f(z).\]
Then, cf. \cite[Thm. 1]{Wolfe-frac}, for any non-negative random variable $T$ with Laplace transform $g(z)=\EE[e^{-zT}]$, $z\geq 0$, the $\kappa$-th absolute moment of $T$ exists, if and only if $\mathbf{D}_z^\kappa g(0)$ exists, in which case
\begin{equation} \label{eq-Wolfemoment} \mathbb{E}[T^\kappa] = \mathbf{D}_z^\kappa g(0).\end{equation}

This allows us to derive the following lemma.   

\begin{lemma} \label{lem-fracmoment1}
	For any $\kappa>0$, $x\geq 0$, the $\kappa$-th moment of the first downward passage time $\tau_x^-|\tau_x^-<\infty$ of a spectrally negative L\'evy process is given by
	\begin{equation} \label{eq_formula_to_work_with}
		\mathbb{E}\left[(\tau_x^-)^\kappa\big|\tau_x^- <\infty \right] = \frac{1}{ \mathbb{P}(\tau_x^- <\infty)}\cdot  \Big[\mathbf{D}^\kappa_q \Big(Z^{(q)}(x) - \frac{q}{\Phi(q)} \cdot W^{(q)}(x)\Big)\Big]_{q=0}
	\end{equation}
and it exists if and only if the right-hand side exists and is finite.
\end{lemma}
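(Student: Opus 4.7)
The plan is to apply Wolfe's identity \eqref{eq-Wolfemoment} directly to the non-negative random variable $T := (\tau_x^- \mid \tau_x^- < \infty)$, whose Laplace transform is readily available through the scale-function representation \eqref{eq_Kyprianou_Scale}.

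First, I would observe that by the definition of conditional expectation and \eqref{eq_Kyprianou_Scale},
\begin{equation*}
g(q) := \mathbb{E}\!\left[e^{-q T}\right] = \frac{\mathbb{E}\!\left[e^{-q \tau_x^-}\mathds{1}_{\{\tau_x^-<\infty\}}\right]}{\mathbb{P}(\tau_x^-<\infty)} = \frac{1}{\mathbb{P}(\tau_x^-<\infty)}\left(Z^{(q)}(x) - \frac{q}{\Phi(q)}\,W^{(q)}(x)\right),
\end{equation*}
for $q\geq 0$, where the value at $q=0$ is well-defined thanks to \eqref{eq-limiteta} (which guarantees the continuous extension of $q/\Phi(q)$ to $q=0$) together with \eqref{eq_Kyprianou_Ruin}. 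Since we excluded the trivial cases, $\mathbb{P}(\tau_x^-<\infty)>0$, and $T$ is a genuine non-negative random variable with Laplace transform $g$.

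Next, I would apply the Wolfe-type identity \eqref{eq-Wolfemoment}: the $\kappa$-th absolute moment of $T$ exists if and only if the Marchaud fractional derivative $\mathbf{D}_q^\kappa g(0)$ exists and is finite, in which case $\mathbb{E}[T^\kappa] = \mathbf{D}_q^\kappa g(0)$. Because the Marchaud derivative \eqref{eq_definition_marchaud_derivative} is defined by a linear integral operator (and for $\kappa\geq 1$ is composed with ordinary differentiation, which is also linear), it commutes with multiplication by the deterministic constant $1/\mathbb{P}(\tau_x^-<\infty)$. Hence
\begin{equation*}
\mathbb{E}[T^\kappa] = \mathbf{D}_q^\kappa g(0) = \frac{1}{\mathbb{P}(\tau_x^-<\infty)}\left[\mathbf{D}_q^\kappa\!\left(Z^{(q)}(x)-\frac{q}{\Phi(q)}W^{(q)}(x)\right)\right]_{q=0},
\end{equation*}
which is exactly \eqref{eq_formula_to_work_with}. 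The equivalence of existence claimed in the lemma follows from the ``if and only if'' part of Wolfe's theorem, applied to the bracketed expression (which differs from $g$ only by a positive finite constant).

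There is essentially no deep obstacle: the lemma is a direct transcription of Wolfe's fractional-derivative characterisation of moments, combined with the known expression for the Laplace transform of $\tau_x^-$ on $\{\tau_x^-<\infty\}$. The only minor point that deserves a word of care is the pass to $q=0$ in $q/\Phi(q)$, which I would justify by invoking \eqref{eq-limiteta} so that the function inside the Marchaud operator is well-defined at, and continuous at, $q=0$; and one should note that pulling the constant $1/\mathbb{P}(\tau_x^-<\infty)$ outside the operator is legitimate precisely because $\mathbf{D}_q^\kappa$ is linear.
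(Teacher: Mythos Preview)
Your proof is correct and follows essentially the same approach as the paper: both identify the Laplace transform of $(\tau_x^-\mid\tau_x^-<\infty)$ via $\mathbb{E}[e^{-q\tau_x^-}\mathds{1}_{\{\tau_x^-<\infty\}}] = \mathbb{E}[e^{-q\tau_x^-}\mid\tau_x^-<\infty]\cdot\mathbb{P}(\tau_x^-<\infty)$ together with \eqref{eq_Kyprianou_Scale}, and then apply Wolfe's identity \eqref{eq-Wolfemoment}. Your version is slightly more expansive in justifying the limit $q\downarrow 0$ and the linearity of $\mathbf{D}_q^\kappa$, but the argument is the same.
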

\begin{proof}
As 
$$\mathbb{E}\left[e^{-q\tau_x^-} \mathds{1}_{\{\tau_x^- <\infty\}} \right] = \mathbb{E}\left[e^{-q\tau_x^-}\big|\tau_x^- <\infty \right]  \cdot \mathbb{P}(\tau_x^- <\infty)$$
the claim follows immediately from \eqref{eq-Wolfemoment} and \eqref{eq_Kyprianou_Scale}.
\end{proof}

\section{Existence of moments} \label{S2}
\setcounter{equation}{0}

In \cite{Delbaen1990}, Delbaen showed in a classical Cram\'er-Lundberg model \eqref{eq-CLmodel} that is \emph{profitable}, i.e. with $\psi'(0+)>0$, that for any $\kappa>0$ the $\kappa$-th moment of the ruin time exists if and only if the $(\kappa+1)$-th moment of the claim sizes exists. Delbaen's proof relies on results on the speed of convergence of random walks. In this paper we use an alternative approach via fractional derivatives of Laplace transforms to prove an extension of the result in \cite{Delbaen1990} to any spectrally negative L\'evy process. Moreover, we additionally consider the \emph{non-profitable} settings of $\psi'(0+)\leq 0$.

Our main result in this section thus reads as follows. Note that although part (i) of Theorem \ref{thm-existence} seems to be known, we were unable to find a ready reference for this part and thus give a short proof below for the reader's convenience. 
\begin{theorem}\label{thm-existence}
	Let $(X_t)_{t\geq 0}$ be a spectrally negative L\'evy process with Laplace exponent $\psi$ as in \eqref{eq-Laplaceexp}, and let $\tau_x^-$ denote its first passage time of $-x$ for $x\geq 0$. 
	\begin{enumerate}
		\item If $\psi'(0+)<0$, then for any $x\geq 0$ there exists $q^\ast>0$ such that 
		\begin{equation*} 
			\mathbb{E}\big[e^{q^\ast \cdot \tau_x^-} \big] <\infty,
		\end{equation*} 
	   which implies for any $x\geq 0$ and $\kappa\geq 0$ 
		\begin{equation*} 
		\mathbb{E}\left[(\tau_x^-)^\kappa \right] <\infty.
		\end{equation*} 
		\item If $\psi'(0+) >0$, then for any $x \geq 0$ and $\kappa>0$ 
			\begin{equation*}
			\mathbb{E}\left[(\tau_x^-)^\kappa|\tau_x^-<\infty\right]<\infty \qquad \text{if and only if} \qquad \int_{[1,\infty)} y^{\kappa+1} \Pi(\diff y)<\infty.
		\end{equation*}
		\item Assume $\psi'(0+)=0$. 
		\begin{enumerate}[(a)]
		\item If there exists $\kappa^*\in (0,1]$ such that $\int_{[1,\infty)} y^{\kappa^*+1}\Pi(\diff y) =\infty$, then for any $x\geq 0$ and $\kappa\geq\kappa^*$
			\begin{equation} \label{eq_kappa-thmoment_infinite} 
			\mathbb{E}\left[(\tau_x^-)^\kappa\right] =\infty.
		\end{equation}
		\item If $\psi''(0+)<\infty$, then \eqref{eq_kappa-thmoment_infinite} holds for any $x\geq0$ and $\kappa>\tfrac{1}{2}$.
		\end{enumerate}
		In particular, \eqref{eq_kappa-thmoment_infinite} holds for any $x \geq 0$ and $\kappa\geq 1$.
	\end{enumerate}
\end{theorem}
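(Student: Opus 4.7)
The unifying idea is Lemma~\ref{lem-fracmoment1}: writing
$$F(q,x):=Z^{(q)}(x)-\frac{q}{\Phi(q)}\,W^{(q)}(x),$$
which for $q\geq 0$ equals $\mathbb{E}[e^{-q\tau_x^-}\mathds{1}_{\{\tau_x^-<\infty\}}]$, it suffices to analyse the Marchaud derivative $\mathbf{D}^\kappa_q F(\cdot,x)$ at $q=0$ under each drift regime. Since $W^{(q)}(x)$ and $Z^{(q)}(x)$ are entire in $q$, the delicate behaviour is concentrated in the factor $q/\Phi(q)$ and, in case~(i), in the analytic continuation of $\Phi$ past $0$.

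For part~(i), $\psi'(0+)<0$ places us in the Cramér regime: strict convexity of $\psi$ with $\psi(0)=0$ forces $\psi$ to attain a negative minimum $\psi(\theta_0)<0$ at some $\theta_0\in(0,\Phi(0))$. A classical exponential-martingale / optional-stopping argument applied to $M_t:=e^{\theta_0 X_t-t\psi(\theta_0)}$ at $\tau_x^-$ (using that $X_{\tau_x^-}\leq -x$ and $\psi(\theta_0)<0$) yields $\mathbb{E}[e^{q^\ast\tau_x^-}]<\infty$ for any $0<q^\ast<-\psi(\theta_0)$; finiteness of all polynomial moments is then immediate.

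For parts~(ii) and~(iii), the central step is to identify $q\mapsto q/\Phi(q)$ as a Bernstein function, namely the Laplace exponent of the subordinator conjugate to $(\tau_x^+)_{x\geq 0}$, yielding a representation
$$\frac{q}{\Phi(q)}=\psi'(0+)+\int_{(0,\infty)}(1-e^{-qt})\,\nu(\dd t)$$
for some Lévy measure $\nu$ on $(0,\infty)$. Combining this with $\psi(\Phi(q))=q$ and the Lévy--Khintchine form~\eqref{eq-Laplaceexp} of $\psi$ provides a \emph{moment dictionary}: via iterated differentiation for integer $\kappa$ and via the Marchaud representation~\eqref{eq_definition_marchaud_derivative} combined with Fubini for fractional $\kappa$,
$$\int_{(0,\infty)} t^\kappa\,\nu(\dd t)<\infty\quad\Longleftrightarrow\quad\int_{[1,\infty)}y^{\kappa+1}\,\Pi(\dd y)<\infty.$$
In part~(ii) ($\psi'(0+)>0$), substituting back into Lemma~\ref{lem-fracmoment1}, and using that $W^{(q)}(x)$ and $Z^{(q)}(x)$ being entire in $q$ contribute only finite ``smooth'' pieces to $\mathbf{D}^\kappa_q F(\cdot,x)|_{q=0}$, delivers the claimed equivalence. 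In part~(iii)(a) the same dictionary converts divergence of $\int y^{\kappa^\ast+1}\Pi(\dd y)$ into divergence of $\mathbf{D}^{\kappa^\ast}_q(q/\Phi(q))|_0$, hence into~\eqref{eq_kappa-thmoment_infinite}. For~(iii)(b), $\psi(\theta)\sim\tfrac12\psi''(0+)\theta^2$ as $\theta\downarrow 0$ forces $\Phi(q)\sim\sqrt{2q/\psi''(0+)}$ and $q/\Phi(q)\sim c\sqrt{q}$ near $0$; since the Marchaud derivative of $\sqrt{q}$ of any order $\kappa>\tfrac12$ diverges at $0$, so does that of $F(\cdot,x)$. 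The concluding ``in particular'' statement follows by a case split on $\int y^2\Pi(\dd y)$: if finite, invoke~(b); if infinite, invoke~(a) with $\kappa^\ast=1$.

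The main obstacle is the moment dictionary itself: while the integer case reduces to a chain-rule computation on $\psi(\Phi(q))=q$ using $\psi^{(k)}(0+)=(-1)^k\int y^k\Pi(\dd y)$ for $k\geq 2$, the fractional case requires a careful Marchaud-plus-Fubini argument to relate the tail of $\nu$ back to the tail of $\Pi$, together with a matching control of the contributions coming from $W^{(q)}(x)$ and $Z^{(q)}(x)$. Once this bridge is in place, all three regimes follow uniformly from Lemma~\ref{lem-fracmoment1}.
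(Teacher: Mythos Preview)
Your overall architecture --- reduce everything to the behaviour of $\eta(q)=q/\Phi(q)$ near $0$, recognise $\eta$ as a Bernstein function, and read off moment information --- is exactly the paper's strategy (their Lemma~\ref{Lemma_k-thmoment_eta} and Proposition~\ref{Prop-etaalsLE}). Two points deserve comment, one a genuine gap and one a difference in execution.

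\textbf{Part (i): the optional-stopping step does not work as written.} Stopping $M_t=e^{\theta_0 X_t-t\psi(\theta_0)}$ at $\tau_x^-$ gives $\mathbb{E}[e^{\theta_0 X_{\tau_x^-}}\,e^{q^\ast\tau_x^-}]\leq 1$, but since $X_{\tau_x^-}\leq -x$ the factor $e^{\theta_0 X_{\tau_x^-}}$ can be \emph{arbitrarily small} (undershoot), so this yields only a \emph{lower} bound on $\mathbb{E}[e^{q^\ast\tau_x^-}]$, not an upper one. A correct martingale route is the Chernoff bound $\mathbb{P}(\tau_x^->t)\leq\mathbb{P}(X_t\geq -x)\leq e^{\theta_0 x}e^{t\psi(\theta_0)}$, which integrates to $\mathbb{E}[e^{q\tau_x^-}]<\infty$ for any $q<-\psi(\theta_0)$. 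The paper instead argues that $\psi$ is analytic near $\Phi(0)>0$ with $\psi'(\Phi(0))>0$, so by the inverse function theorem $\Phi$ (hence $q/\Phi(q)$ and the full Laplace transform via \eqref{eq_Kyprianou_Scale}) extends analytically past $q=0$.

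\textbf{Parts (ii)--(iii): the ``moment dictionary'' hides the real work.} You posit directly that $\int t^\kappa\nu(\dd t)<\infty\Leftrightarrow\int y^{\kappa+1}\Pi(\dd y)<\infty$ and propose to obtain it by differentiating $\psi(\Phi(q))=q$. The paper makes this precise by exploiting one more layer of structure: $\eta(q)=\varphi(\Phi(q))$ with $\varphi(\theta)=\psi(\theta)/\theta$ the Laplace exponent of an explicit subordinator $Y$ whose L\'evy density is $y\mapsto\Pi((y,\infty))$. Thus the conjugate subordinator is the \emph{subordinated} process $(\tau^+_{Y_t})_{t\geq0}$, and a general result on moments of subordinated subordinators (their Proposition~\ref{lem-momentsubordination}) reduces $\mathbb{E}[(\tau^+_{Y_1})^\kappa]<\infty$ to the pair of conditions $\mathbb{E}[(\tau_1^+)^\kappa]<\infty$ and $\mathbb{E}[Y_1^\kappa]<\infty$. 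Integration by parts turns the second into $\int y^{\kappa+1}\Pi(\dd y)<\infty$; but one still has to show that this condition \emph{automatically forces} the first, which is where the Fa\`a di Bruno argument on $\psi(\Phi(q))=q$ enters (their Lemma~\ref{Lemma_psiphin}). Your plan conflates these two conditions; without separating them it is not clear how a ``Marchaud plus Fubini'' computation on $\nu$ alone recovers the equivalence, since $\nu$ depends on both $\Pi$ and the law of $\tau_1^+$.

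For (iii)(b) your heuristic $\eta(q)\sim c\sqrt{q}$ is the right picture, but note that $\mathbf{D}^\kappa(\sqrt{\cdot})(0)$ diverges for \emph{every} $\kappa\in(0,1)$ (the tail of the Marchaud integral also blows up), so the asymptotic alone does not isolate the threshold $\kappa=\tfrac12$; one must additionally control $\int_\varepsilon^\infty\eta(u)u^{-\kappa-1}\dd u$. The paper sidesteps this by working instead with $\mathbb{E}[(\tau_1^+)^\kappa]=[\mathbf{D}^\kappa_q e^{-\Phi(q)}]_{q=0}$, for which the tail integral is trivially finite, and then runs a l'Hospital argument to show the head diverges for $\kappa\in(\tfrac12,1)$. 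Your concluding dichotomy on $\int y^2\Pi(\dd y)$ matches the paper exactly.
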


\begin{remark}
	Note that a priori the above theorem needs no restrictions concerning possible choices of the location parameter $c\in\RR$ of $(X_t)_{t\geq 0}$. However, as by \cite[Ex. 25.12]{sato2nd},
	\begin{equation} \label{eq-relationcpsi} c - \int_{[1,\infty)} y \Pi(\diff y)= \EE[X_1]= \psi'(0+), \end{equation}
	in cases (ii) and (iii) the assumption $\psi'(0+)\geq 0$ implies that actually $$c\geq  \int_{[1,\infty)} y \Pi(\diff y)\geq 0.$$
	In particular $c<0$ is a valid choice only in case (i).
\end{remark}

\begin{remark}
	At first glance, Theorem \ref{thm-existence} (iii)  suggests that for an oscillating process $(X_t)_{t\geq 0}$ no fractional moments of the first passage time of zero exist. This, however, is not true in general and we provide two counterexamples:
	\begin{enumerate}
		\item Consider a (standardized) Brownian motion without drift for which by \eqref{eq_BrownianmotionLaplace}
		 \begin{equation*}
			\mathbb{E}\left[e^{-q\cdot \tau_x^-}\right] = e^{-\sqrt{2q}\cdot x}, \qquad x\geq 0.
		\end{equation*}
	Then $\PP(\tau_x^-<\infty)=1$ and from \eqref{eq_definition_marchaud_derivative} and \eqref{eq-Wolfemoment} we obtain for any $\kappa\in(0,1)$ that 
	\begin{align}
			\mathbb{E}[(\tau_x^-)^\kappa] &= \left[ \mathbf{D}^\kappa_q e^{-\sqrt{2q}\cdot x}\right]_{q=0}   
			= \frac{\kappa}{\Gamma(1-\kappa)} \int_0^\infty \frac{1 - e^{-\sqrt{2u}\cdot x} }{u^{\kappa +1}} \diff u,
		\end{align}
			which is finite if and only if $\kappa \in(0,\tfrac{1}{2})$. In particular, in this case \eqref{eq_kappa-thmoment_infinite} holds for any $\kappa\geq \tfrac{1}{2}$ which shows that Theorem \ref{thm-existence} (iii) (b) is near to being sharp.
		\item Consider a spectrally negative, $\alpha$-stable L\'evy process $(X_t)_{t\geq 0}$,  with index $\alpha\in(1,2)$, such that the Laplace exponent of $-X$ is given by $\psi(\theta) = \theta^\alpha$ and in particular $\psi'(0)=0$. For such a process it has been shown in \cite[Prop. 4 and subsequent Rem.]{Simon2011} that the first passage time $\tau_x^{-}$
		admits finite fractional moments, namely
		\[ \mathbb{E}[(\tau_x^{-})^\kappa]<\infty \quad \text{if and only if}\quad \kappa \in (-1,1-1/\alpha).\]
	\end{enumerate} 
	The identification of the threshold $\kappa^*\leq 1$  such that $\mathbb{E}[(\tau_x^-)^\kappa]<\infty$, $\kappa<\kappa^*$ and $\mathbb{E}(\tau_x^-)^\kappa] = \infty $ for $\kappa\geq \kappa^*$ for a general oscillating and spectrally negative Lévy process seems difficult: The chosen approach for our proof of Theorem \ref{thm-existence} below only yields the sufficient condition for \eqref{eq_kappa-thmoment_infinite} as stated in Theorem \ref{thm-existence}(iii)(a). Moreover, the above example of a Brownian motion clearly shows that the threshold $\kappa^\ast$ can not be solely depending on the Lévy measure. We therefore leave this question open for future research.
\end{remark}

\begin{proof}[Proof of Theorem \ref{thm-existence} (i)] 
Recall that if $\psi'(0+)<0$, then $\Phi(0)>0$ and by convexity of $\psi$ we obtain $\psi'(\Phi(0))>0$. Furthermore, the Laplace exponent $\psi$ extended to $\mathbb{C}$ is analytic on $\{x\in\mathbb{C}: ~\operatorname{Re}(z)>0\}$ and hence it is analytic in a neighborhood of $\Phi(0)$.  Consequently, the inverse function theorem of complex analysis, cf. \cite[Thm. 10.30]{rudincomplex} implies that also the inverse of $\psi(\theta)$, i.e., $\Phi(q)$, is analytic in a neighborhood of zero. By \eqref{eq_Kyprianou_Scale} this yields that $\mathbb{E}[e^{-q\tau_x^-}]$ is analytic in a neighborhood of zero as well, and thus $\mathbb{E}[e^{q^*\tau_x^-}]<\infty$ for some $q^*>0$ as claimed. 
\end{proof}

To prove the second and third part of Theorem \ref{thm-existence} we start with a simple lemma that reduces the problem of existence of moments of the first passage time to finiteness of (fractional) derivatives of a certain function in zero.
\begin{lemma} \label{Lemma_k-thmoment_eta}
	Set $\eta(q):=\frac{q}{\Phi(q)}$, $q>0$. Then for any $x\geq0$  and $\kappa>0$ 
	\begin{equation} \label{eq_MomentTau_FractDerivEta}
		\mathbb{E}\left[(\tau_x^-)^\kappa|\tau_x^-<\infty\right] <\infty \qquad \text{if and only if} \qquad  \lim_{q\downarrow 0} \abs{\mathbf{D}_q^\kappa \eta(q)} <\infty. 
	\end{equation} 
\end{lemma}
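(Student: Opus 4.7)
The plan is to apply Lemma \ref{lem-fracmoment1} and then exploit the smoothness of the scale functions $W^{(q)}$ and $Z^{(q)}$ in $q$ to reduce the question to a property of $\eta$ alone. By Lemma \ref{lem-fracmoment1}, finiteness of $\EE[(\tau_x^-)^\kappa|\tau_x^-<\infty]$ is equivalent to finiteness of $[\mathbf{D}_q^\kappa(Z^{(q)}(x)-\eta(q)W^{(q)}(x))]_{q=0}$, so it suffices to show that the latter is finite if and only if $\lim_{q\downarrow 0}|\mathbf{D}_q^\kappa\eta(q)|<\infty$.

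I would start by recalling (as noted in the preliminaries) that $q\mapsto W^{(q)}(x)$ and $q\mapsto Z^{(q)}(x)$ extend to analytic functions on $\CC$ with bounded derivatives of every order on compact sets, so that they can be treated as smooth factors. The key decomposition is
$$\eta(q)W^{(q)}(x)\;=\;W^{(0)}(x)\,\eta(q)\;+\;\eta(q)\bigl[W^{(q)}(x)-W^{(0)}(x)\bigr],$$
in which the potentially singular behaviour of $\eta(q)$ at the origin is isolated in the first summand, while the factor $h(q):=W^{(q)}(x)-W^{(0)}(x)$ in the second summand is smooth and vanishes at $q=0$. Using the power-series expansion $h(q)=\sum_{k\ge 1} q^k w_k(x)$ coming from the analyticity of $q\mapsto W^{(q)}(x)$, together with a Leibniz-type argument for the Marchaud fractional derivative, one sees that each contribution $[\mathbf{D}_q^\kappa(q^k\eta(q))]_{q=0}$ ($k\ge 1$) is one order more regular than $\mathbf{D}_q^\kappa \eta$ itself, and therefore finite. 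Combining this with the analogous finiteness statement for $[\mathbf{D}_q^\kappa Z^{(q)}(x)]_{q=0}$, we arrive at
$$\bigl[\mathbf{D}_q^\kappa(Z^{(q)}(x)-\eta(q)W^{(q)}(x))\bigr]_{q=0}\;=\;-W^{(0)}(x)\cdot \lim_{q\downarrow 0}\mathbf{D}_q^\kappa\eta(q)\;+\;(\text{finite terms}).$$

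To conclude, one invokes $W^{(0)}(x)>0$, which holds under the standing assumptions of the paper (in particular the exclusion of $x=0$ whenever $X$ is of unbounded variation). The main obstacle will be to make the Leibniz-type manipulation of the Marchaud derivative fully rigorous: the vanishing of $h(q)$ at $q=0$ boosts the regularity of $\eta(q)h(q)$ by one order of fractional differentiation, and converting this intuition into an explicit bound on the Marchaud integral--controlling the integrand both near $q=0$ (where smoothness of $h$ kills the $\kappa$-order singularity) and at infinity (where the Laplace-transform structure of $F(q):=Z^{(q)}(x)-\eta(q)W^{(q)}(x)$ ensures overall integrability)--is the core technical step.
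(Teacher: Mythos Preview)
Your approach is correct in spirit and closely parallels the paper's own proof: both start from Lemma~\ref{lem-fracmoment1}, use the analyticity of $q\mapsto W^{(q)}(x)$ and $q\mapsto Z^{(q)}(x)$ to strip away the smooth factors, and then invoke $W^{(0)}(x)>0$ (with the bounded-variation caveat at $x=0$). The difference is purely in how the product $\eta(q)W^{(q)}(x)$ is handled. The paper observes that the Marchaud derivative agrees with the Liouville derivative for sufficiently good functions and then simply \emph{cites} a fractional Leibniz rule (Uchaikin, \emph{Fractional Derivatives for Physicists and Engineers}, p.~206), which immediately reduces $\mathbf{D}_q^\kappa(\eta\cdot W^{(q)})$ to a series in which only the $j=0$ term $W^{(0)}(x)\,\mathbf{D}_q^\kappa\eta$ can diverge. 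Your explicit decomposition $W^{(q)}=W^{(0)}+h(q)$ with $h(0)=0$ is essentially the hands-on version of that Leibniz expansion.

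One point to tighten: your assertion that each term $[\mathbf{D}_q^\kappa(q^k\eta(q))]_{q=0}$, $k\ge 1$, is ``one order more regular \emph{and therefore finite}'' is not quite self-contained. Multiplication by $q^k$ shifts the question to finiteness of $[\mathbf{D}_q^{\kappa-k}\eta](0)$, which is not automatic---it follows because finiteness of the $\kappa$-th fractional derivative of a Bernstein function (which $\eta$ is, by Proposition~\ref{Prop-etaalsLE}) implies finiteness of all lower orders, and because for $\kappa-k<0$ one is dealing with a fractional integral. Moreover, the individual pieces $Z^{(q)}(x)$, $q^k\eta(q)$ grow at infinity, so their Marchaud integrals over $(0,\infty)$ do not converge separately; as you rightly flag, one must keep the full Laplace transform $F$ intact for the tail and localise the decomposition near $q=0$. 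The paper sidesteps both issues by packaging them into the cited product rule; your route works but requires these two observations to be made explicit.
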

\begin{proof}
It follows immediately from Lemma \ref{lem-fracmoment1} that $\mathbb{E}\left[(\tau_x^-)^\kappa|\tau_x^-<\infty\right] <\infty$ if and only if $\lim_{q\downarrow 0}\mathbf{D}^\kappa_q \left(Z^{(q)}(x) - \eta(q) \cdot W^{(q)}(x)\right)<\infty$. However, $q\mapsto W^{(q)}(x)$ and $q\mapsto Z^{(q)}(x)$ are infinitely often differentiable with bounded derivatives on $[0,\infty)$. Hence, linearity of the (fractional) derivative reduces the problem to the characterisation of finiteness of $\lim_{q\downarrow 0}\mathbf{D}^\kappa_q \left( \eta(q) \cdot W^{(q)}(x)\right)$. \\
Observe that the definition of the Marchaud derivative is equivalent to the Liouville derivative for sufficiently good functions, see \cite[Remark 5.3]{Samko1993} for details. We may therefore apply the product rule for fractional Liouville derivatives, cf. \cite[p. 206]{Uchaikin2013}, to $\eta(q) \cdot W^{(q)}(x)$. Recalling again that $q\mapsto W^{(q)}(x)$ is infinitely often differentiable with bounded derivatives on every compact $K\subset \CC$, and that $W^{(q)}(x)>0$ for any $x>0$, we conclude that $\lim_{q\downarrow 0}\mathbf{D}^\kappa_q \left(\eta(q) \cdot W^{(q)}(x)\right)<\infty$ if and only if $\lim_{q\downarrow 0}\mathbf{D}^\kappa_q \eta(q) <\infty$ as claimed. \\
If $x=0$ note that $W^{(q)}(0)>0$ if and only if $(X_t)_{t\geq 0}$ is of bounded variation, cf. \cite[Eq. (25)]{Avram2020}, and in this case the above argumentation yields the result. 
\end{proof}

The remainder of the proof of Theorem \ref{thm-existence} relies on the interpretation of $\eta(q)$ as Laplace exponent of a certain killed subordinator as shown in the next proposition. Recall that a \emph{subordinator} $(Y_t)_{t \geq 0}$ is a L\'evy process with non-decreasing paths whose Laplace exponent 
$\varphi(\theta) = - \frac{1}{t} \log \EE[e^{-\theta Y_t} ]$ is of the form
\begin{equation} \label{eq_LaplaceExponent_Subordinator}
\varphi(\theta)= \tilde{c}\cdot\theta+\int_0^\infty (1-e^{-\theta y}) \tilde{\Pi}(\diff y),
\end{equation}
for $\theta\geq 0$, a drift $\tilde{c}\geq 0$ and a measure $\tilde{\Pi}$ such that $\int_{(0,\infty)} (1\wedge y) \tilde{\Pi}(\diff y)<\infty$. 
A \emph{killed} subordinator $(Y_t)_{t\geq 0}$ is defined via 
\begin{equation*}
	Y_t = \begin{cases} \tilde{Y}_t, & \text{if } t<\mathbf{e}_{\beta}, \\ 
		\zeta, & \text{if }t\geq \mathbf{e}_{\beta},
	\end{cases}
\end{equation*}
where $(\tilde{Y}_t)_{t\geq 0}$ is a subordinator, $\mathbf{e}_{\beta}$ is an independent $\operatorname{Exp}(\beta)$-distributed time, $\beta>0$, and $\zeta$ denotes some \emph{cemetery state}. As usual, we interpret $\beta=0$ as $\mathbf{e}_\beta = \infty$ corresponding to no killing. The Laplace exponent $\varphi_Y$ of a killed subordinator is given by 
\begin{equation} \label{eq_LaplaceExponent_KilledSubordinator}
	\varphi_Y(\theta) = - \log \mathbb{E}\left[e^{-\theta Y_1}\right] = - \log \mathbb{E}\left[e^{-\theta \tilde{Y}_1}\cdot \mathds{1}_{\{1<\mathbf{e}_{\beta}\}}\right] = \beta + \varphi_{\tilde{Y}}(\theta),
\end{equation}
for the Laplace exponent $\varphi_{\tilde{Y}}(\theta)$ of $(\tilde{Y}_t)_{t\geq 0}$.

Further, for any $x\geq 0$, let  $\tau_x^+ := \inf\{ t>0: ~ X_t >x\}$ be the first upwards passage time of $x$, i.e. the first time that $X_t$ is above $x$. It is well-known, cf. \cite[Thm. 3.12]{Kyprianou2014}, that for all $q\geq 0$
\begin{equation} \label{eq_Laplace_taux+}
	\mathbb{E}\left[e^{-q\cdot \tau_x^+}\cdot\mathds{1}_{\{\tau_x^+<\infty\}}\right] = e^{-\Phi(q)x}, \quad x\geq 0.
\end{equation}
If furthermore $\mathbb{E}[X_1]=\psi'(0+)\geq 0$, then $(\tau_x^+)_{x\ge0 }$ is a subordinator with Laplace exponent $\Phi(q)$, cf. \cite[Cor. 3.14]{Kyprianou2014}.

\begin{proposition}\label{Prop-etaalsLE}
Assume $\psi'(0+)\geq 0$ and define a killed subordinator $(Y_t)_{t\geq 0}$, independent of $(\tau_x^+)_{x\geq 0}$, through its Laplace exponent $\varphi(\theta)= -t^{-1} \log \mathbb{E}[e^{-\theta Y_t}]$ by setting
\begin{equation*}
	\varphi(\theta) :=  \psi'(0+)  + \frac{\sigma^2}{2} \theta  + \int_0^\infty  \big(1  - e^{-\theta y}\big)  \Pi((y,\infty)) \diff y, \quad \theta > 0.
\end{equation*} 
 Then $(\tau_{Y_t}^+)_{t\geq 0}$ is a killed subordinator with Laplace exponent 
	\begin{equation} \label{eq_laplacetaux+}
	-\frac{1}{t}\log \mathbb{E}\left[e^{-q\cdot \tau_{Y_t}^+} \right]  = \eta(q), \quad q\geq 0.
	\end{equation}
\end{proposition}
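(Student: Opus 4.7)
\textbf{Proof plan for Proposition \ref{Prop-etaalsLE}.} My strategy has three steps: identify $\varphi$ as the Laplace exponent of a bona fide killed subordinator, establish the algebraic identity $\theta\varphi(\theta)=\psi(\theta)$, and then derive \eqref{eq_laplacetaux+} by subordination. Comparing $\varphi$ with the canonical form \eqref{eq_LaplaceExponent_KilledSubordinator}, I read off killing rate $\beta=\psi'(0+)\geq 0$, drift $\tilde c=\sigma^2/2$ and Lévy measure $\tilde\Pi(\dd y)=\Pi((y,\infty))\dd y$. The only point requiring verification is the integrability $\int_0^\infty(1\wedge y)\Pi((y,\infty))\dd y<\infty$, which by Fubini equals $\int_{(0,\infty)}\int_0^z(1\wedge y)\dd y\,\Pi(\dd z)$ and is finite because $\int(y^2\wedge 1)\Pi(\dd y)<\infty$ controls small $z$ while $\psi'(0+)\geq 0$ combined with \eqref{eq-relationcpsi} yields $\int_{[1,\infty)}y\,\Pi(\dd y)<\infty$ for large $z$.

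The core of the argument is the identity $\theta\varphi(\theta)=\psi(\theta)$. I would apply Fubini to the integral term,
\begin{equation*}
\theta\int_0^\infty(1-e^{-\theta y})\Pi((y,\infty))\dd y=\int_{(0,\infty)}\int_0^z\theta(1-e^{-\theta y})\dd y\,\Pi(\dd z)=\int_{(0,\infty)}(e^{-\theta z}-1+\theta z)\Pi(\dd z),
\end{equation*}
and then substitute $\psi'(0+)=c-\int_{[1,\infty)}y\,\Pi(\dd y)$ from \eqref{eq-relationcpsi}. The linear correction $-\theta\int_{[1,\infty)}y\,\Pi(\dd y)$ exactly absorbs the difference between the untruncated compensator $\theta z$ and $\theta z\mathds{1}_{\{z<1\}}$, so that $\theta\varphi(\theta)$ matches the expression \eqref{eq-Laplaceexp} for $\psi(\theta)$. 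Evaluating at $\theta=\Phi(q)$ then gives $\varphi(\Phi(q))=\psi(\Phi(q))/\Phi(q)=q/\Phi(q)=\eta(q)$.

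For the last step, since $\psi'(0+)\geq 0$ the process $(\tau_x^+)_{x\geq 0}$ is a non-killed subordinator with Laplace exponent $\Phi$ by \eqref{eq_Laplace_taux+}, and independent of $Y$ by assumption. Conditioning on $Y_t$, with the convention $\tau_\zeta^+=\infty$ on the killing event $\{t\geq\mathbf{e}_\beta\}$, yields
\begin{equation*}
\mathbb{E}\big[e^{-q\tau_{Y_t}^+}\mathds{1}_{\{\tau_{Y_t}^+<\infty\}}\big]=\mathbb{E}\big[e^{-\Phi(q) Y_t}\big]=e^{-t\varphi(\Phi(q))}=e^{-t\eta(q)}.
\end{equation*}
Stationary and independent increments of $(\tau_{Y_t}^+)_{t\geq 0}$, together with its killing structure, are inherited from those of $Y$ via the independent-increments property of $(\tau_x^+)_{x\geq 0}$, so the time-changed process is a killed subordinator with Laplace exponent $\eta$. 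I expect the main obstacle to be the bookkeeping in the algebraic identity: the Fubini swap naturally produces the untruncated compensator $\theta z$ rather than $\theta z\mathds{1}_{\{z<1\}}$, so the difference must be carefully absorbed into $c\theta$ via \eqref{eq-relationcpsi}, and one should verify that each integral involved converges under the hypothesis $\psi'(0+)\geq 0$.
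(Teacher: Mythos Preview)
Your proof is correct and follows the same overall structure as the paper's: establish $\varphi(\theta)=\psi(\theta)/\theta$, then evaluate at $\theta=\Phi(q)$ and invoke subordination. The only difference is that where you verify the identity $\theta\varphi(\theta)=\psi(\theta)$ by a direct Fubini computation and the substitution \eqref{eq-relationcpsi}, the paper instead cites \cite[Thm.~1]{Kyprianou2008} on parent processes to obtain this identity ready-made, and then quotes \cite[Thm.~30.1]{sato2nd} for the subordination step. Your route is more self-contained and makes the argument independent of the parent-process machinery; the paper's route is shorter but relies on an external structural result. Both are valid and essentially equivalent.
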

\begin{proof}
An application of \cite[Thm. 1]{Kyprianou2008} on the subordinator $(Y_t)_{t\geq 0}$ implies that there exists a spectrally negative Lévy process - the so-called \emph{parent process} - with Laplace exponent $\theta \cdot \varphi(\theta)$ and whose characteristic triplet coincides with the one of $(X_t)_{t\geq 0}$. Therefore
\begin{equation*}
\varphi(\theta) = \frac{\psi(\theta)}{\theta}.
\end{equation*}
Equation \eqref{eq_laplacetaux+} is now a direct consequence of \cite[Thm. 30.1]{sato2nd} and the fact that 
\begin{equation*}
\varphi(\Phi(q))= \frac{\psi(\Phi(q))}{\Phi(q)} = \frac{q}{\Phi(q)}=\eta(q). \qedhere
\end{equation*}
\end{proof}

\begin{remark} Note that the above proposition implies that - as long as $\psi'(0+)\geq 0$ - the subordinator $(\tau^+_x)_{x\geq 0}$ is a \emph{special subordinator} since its \emph{conjugate} Laplace exponent $\frac{q}{\Phi(q)}=\eta(q)$ is shown to be the Laplace exponent of a (killed) subordinator. See e.g. \cite[Chapter 5.6]{Kyprianou2014} or \cite[Chapter 11]{rene-book} for general information on special subordinators and their Laplace exponents that are also known as \emph{special Bernstein functions}.
	\end{remark}

Combining Lemma \ref{Lemma_k-thmoment_eta},  Proposition \ref{Prop-etaalsLE}, and Equation \eqref{eq-Wolfemoment} it is an immediate consequence that, assuming $\psi'(0+)\geq 0$, for all $\kappa>0$ and $x\geq 0$
\begin{equation} \label{eq_tau-_tau+}
	\mathbb{E}\left[(\tau_x^-)^\kappa|\tau_x^-<\infty\right] <\infty \qquad \text{if and only if} \qquad  \EE[(\tau_{Y_1}^+)^\kappa] <\infty.
\end{equation}

In order to find suitable conditions for the right-hand side of \eqref{eq_tau-_tau+}, we next prove a general statement concerning the existence of moments of a subordinated subordinator.

\begin{proposition}\label{lem-momentsubordination} 
	Let $(Z_t)_{t\geq 0}$ be a non-zero subordinator, and let $(Y_t)_{t\geq 0}$ be a (possibly killed) non-zero subordinator, independent of $(Z_t)_{t\geq 0}$. If $\mathbb{E}[Z_1]<\infty$, then for all $\kappa>0$ 
	$$\EE[Z_{Y_1}^\kappa] <\infty  \qquad \text{if and only if} \qquad \Big[ \EE[Z_1^\kappa] <\infty  \text{ and } \EE[Y_1^{\kappa}] <\infty \Big].$$
	If $\mathbb{E}[Z_1]=\infty$ and $\kappa\in (0,1)$, then $\EE[Z_{Y_1}^\kappa] <\infty$ implies $\EE[Z_1^\kappa] <\infty$ and $\EE[Y_1^{\kappa}] <\infty$.
\end{proposition}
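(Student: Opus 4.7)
The plan is to condition on $Y_1$ and exploit independence to reduce the problem to analysing $\EE[Z_y^\kappa]$ as a function of $y$, namely
\begin{equation*}
\EE[Z_{Y_1}^\kappa] = \int_0^\infty \EE[Z_y^\kappa]\, \PP(Y_1 \in \diff y).
\end{equation*}
Lower bounds on $\EE[Z_y^\kappa]$ in $y$ will translate into necessary moment conditions on $Y_1$ and $Z_1$, while matching upper bounds will yield sufficiency in the first part of the proposition. Monotonicity of the subordinators will propagate moment information from $Z_{y_0}$ for a single $y_0>0$ to $Z_1$, and from $Y_1$ restricted to $\{Y_1\geq t_0\}$ to $Y_1$ globally.

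\textbf{Forward direction.} This step would be carried out for both parts of the proposition simultaneously and does not use $\EE[Z_1]<\infty$. To obtain $\EE[Y_1^\kappa]<\infty$, I would exploit that $Z$ is a non-zero subordinator, so the strong law of large numbers gives $Z_t/t \to \EE[Z_1] \in (0, \infty]$ almost surely, hence there exist $c>0$ and $t_0>0$ with $\PP(Z_t \geq ct) \geq 1/2$ for every $t \geq t_0$. The Markov inequality in reverse then yields $\EE[Z_t^\kappa] \geq (ct)^\kappa/2$ for $t \geq t_0$, and integrating against $\PP(Y_1\in\diff y)$ gives $\EE[Z_{Y_1}^\kappa] \geq (c^\kappa/2)\,\EE[Y_1^\kappa \mathds{1}_{\{Y_1 \geq t_0\}}]$, forcing $\EE[Y_1^\kappa] < \infty$. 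For $\EE[Z_1^\kappa]<\infty$, since $Y$ is non-zero there exists $y_0>0$ with $p:=\PP(Y_1\geq y_0)>0$; monotonicity of $Z$ and independence then give $\EE[Z_{y_0}^\kappa] \leq \EE[Z_{Y_1}^\kappa]/p < \infty$. Writing $Z_1 \leq Z_{n y_0}$ for $n:=\lceil 1/y_0 \rceil$ and decomposing $Z_{n y_0}$ into a sum of $n$ i.i.d.\ copies of $Z_{y_0}$, Jensen's inequality (for $\kappa \geq 1$) or subadditivity of $x \mapsto x^\kappa$ (for $\kappa \leq 1$) yields $\EE[Z_1^\kappa] \leq n^{\kappa \vee 1} \EE[Z_{y_0}^\kappa] < \infty$.

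\textbf{Backward direction for the first case.} Assuming $\EE[Z_1]<\infty$, $\EE[Z_1^\kappa]<\infty$ and $\EE[Y_1^\kappa]<\infty$, I would bound $\EE[Z_y^\kappa]$ through Jensen's inequality according to the sign of $\kappa-1$. For $\kappa \in (0,1]$, concavity of $x \mapsto x^\kappa$ gives $\EE[Z_y^\kappa] \leq (\EE[Z_y])^\kappa = y^\kappa (\EE[Z_1])^\kappa$, where $\EE[Z_1]<\infty$ is crucially used; combined with $\EE[Z_y^\kappa] \leq \EE[Z_1^\kappa]$ for $y \leq 1$ by monotonicity, integration yields $\EE[Z_{Y_1}^\kappa] \leq \EE[Z_1^\kappa] + (\EE[Z_1])^\kappa \EE[Y_1^\kappa] < \infty$. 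For $\kappa > 1$, expressing $Z_{\lceil y \rceil}$ as a sum of $\lceil y \rceil$ i.i.d.\ copies of $Z_1$ and applying Jensen in the convex direction gives $\EE[Z_y^\kappa] \leq \EE[Z_{\lceil y \rceil}^\kappa] \leq \lceil y \rceil^\kappa \EE[Z_1^\kappa] \leq 2^\kappa (1 + y^\kappa) \EE[Z_1^\kappa]$, so that $\EE[Z_{Y_1}^\kappa] \leq 2^\kappa \EE[Z_1^\kappa](1 + \EE[Y_1^\kappa]) < \infty$.

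\textbf{Main obstacle.} The subtle point is to understand why the assumption $\EE[Z_1] < \infty$ is essential for the backward implication, and correspondingly why the second part of the proposition restricts to $\kappa \in (0,1)$. When $\EE[Z_1] = \infty$, the Jensen bound $\EE[Z_y^\kappa] \leq y^\kappa (\EE[Z_1])^\kappa$ becomes vacuous for $\kappa<1$, and for a stable subordinator with $Z_y \eqdr y^{1/\alpha} Z_1$, $\alpha \in (0,1)$, one has $\EE[Z_y^\kappa] = y^{\kappa/\alpha} \EE[Z_1^\kappa]$, so finiteness of $\EE[Z_{Y_1}^\kappa]$ genuinely requires the strictly stronger condition $\EE[Y_1^{\kappa/\alpha}] < \infty$; the backward direction therefore truly fails without $\EE[Z_1]<\infty$. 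For $\kappa \geq 1$ combined with $\EE[Z_1] = \infty$, Jensen in the convex direction forces $\EE[Z_y^\kappa] \geq (\EE[Z_y])^\kappa = \infty$ for every $y > 0$, so that $\EE[Z_{Y_1}^\kappa] = \infty$ automatically, making the implication vacuous and explaining the restriction to $\kappa \in (0,1)$.
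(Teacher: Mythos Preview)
Your proof is correct and proceeds along a genuinely different, more elementary route than the paper's.

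The paper works at the level of L\'evy measures: it invokes Sato's formula for the L\'evy measure $\nu$ of the subordinated process, $\nu(B)= b_Y\,\nu_Z(B) + \int_{(0,\infty)} \mu^s(B)\,\nu_Y(\diff s)$, and then analyses $\int_{[1,\infty)} z^\kappa\,\nu(\diff z)$. Because $\nu_Y$ may have infinite mass near $0$, the paper needs an auxiliary bound $\PP(Z_s\geq 1)\leq C s$ (Sato, Lemma~30.3) to control the small-$s$ contribution, and a separate lemma establishing that $t\mapsto \EE[Z_t^\kappa]$ has polynomial order~$\kappa$ (under $\EE[Z_1]<\infty$) for the large-$s$ part. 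You instead condition directly on $Y_1$, integrate against the \emph{probability} law $\PP(Y_1\in\diff y)$, and derive the required two-sided polynomial bounds on $\EE[Z_y^\kappa]$ inline via the SLLN (lower bound) and Jensen/H\"older on i.i.d.\ increments (upper bound). Working with $\PP(Y_1\in\cdot)$ rather than $\nu_Y$ makes the small-$y$ region automatically harmless, so you never need the Sato Lemma~30.3 estimate; the price is that your argument does not simultaneously yield the L\'evy-measure characterisation, which the paper exploits elsewhere.

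One small point you should make explicit: when $Y$ is genuinely killed, the conditioning identity $\EE[Z_{Y_1}^\kappa]=\int_0^\infty \EE[Z_y^\kappa]\,\PP(Y_1\in\diff y)$ has to be read on the event $\{Y_1<\zeta\}$, i.e.\ with the un-killed version $\tilde Y$ in place of $Y$; since $\EE[Y_1^\kappa]<\infty$ iff $\EE[\tilde Y_1^\kappa]<\infty$, and likewise for $Z_{Y_1}$, this does not affect the equivalence but deserves a sentence.
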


To prove this proposition, we need the following lemma. 

\begin{lemma} \label{Lemma_taux+_polynomial} 
	Let $(Z_t)_{t\geq 0}$ be a non-zero subordinator such that $\mathbb{E}[Z_1]<\infty$. If $\EE[Z_1^\kappa]<\infty$ for some $\kappa>0$, then $\EE[Z_t^\kappa]<\infty$ for all $t\geq 0$ and the mapping $t\mapsto \EE[Z_t^\kappa]$,  $t\geq 1$, is of polynomial order $\kappa$.
\end{lemma}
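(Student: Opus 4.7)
The plan is to first establish the finiteness of $\mathbb{E}[Z_t^\kappa]$ for all $t>0$, and then bound the growth of $t\mapsto \mathbb{E}[Z_t^\kappa]$ by a constant times $t^\kappa$ via Jensen-type inequalities, splitting the argument at $\kappa=1$ according to whether $x\mapsto x^\kappa$ is concave or convex.

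For the finiteness claim, the equivalence \eqref{Lemma_equivalence_momentenBedingungen} applied directly to the subordinator $(Z_t)_{t\geq 0}$ shows that $\mathbb{E}[Z_1^\kappa]<\infty$ is equivalent to $\int_{[1,\infty)} y^\kappa\,\tilde\Pi(\dd y)<\infty$, which in turn is equivalent to $\mathbb{E}[Z_t^\kappa]<\infty$ for some (equivalently, every) $t>0$. Hence finiteness propagates to all $t$ automatically, and we may focus on quantifying the growth in $t$.

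For the growth estimate, in the case $\kappa\in(0,1]$ the map $x\mapsto x^\kappa$ is concave on $[0,\infty)$, so Jensen's inequality together with the identity $\mathbb{E}[Z_t]=t\mathbb{E}[Z_1]$ (valid since $\mathbb{E}[Z_1]<\infty$) yields
\[
\mathbb{E}[Z_t^\kappa]\leq \bigl(\mathbb{E}[Z_t]\bigr)^\kappa = \mathbb{E}[Z_1]^\kappa\, t^\kappa,
\]
which is the desired polynomial bound. In the case $\kappa>1$ the map $x\mapsto x^\kappa$ is convex; writing $Z_n=\sum_{i=1}^n (Z_i-Z_{i-1})$ as a sum of $n$ i.i.d.\ copies of $Z_1$ and applying convexity to the arithmetic mean gives
\[
\Bigl(\tfrac{1}{n} Z_n\Bigr)^\kappa \leq \tfrac{1}{n}\sum_{i=1}^n (Z_i-Z_{i-1})^\kappa,
\]
so $\mathbb{E}[Z_n^\kappa]\leq n^\kappa\,\mathbb{E}[Z_1^\kappa]$. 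To pass from integer to continuous time, I would use the monotonicity of subordinator paths: since $Z_t\leq Z_{\lceil t\rceil}$ almost surely, one gets $\mathbb{E}[Z_t^\kappa]\leq \lceil t\rceil^\kappa \mathbb{E}[Z_1^\kappa]\leq (2t)^\kappa \mathbb{E}[Z_1^\kappa]$ for all $t\geq 1$.

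I do not anticipate a serious obstacle here — the argument is essentially a clean dichotomy at $\kappa=1$ between Jensen applied to the concave power (which automatically absorbs the mean growth) and Jensen applied to the convex power (which combines with stationary independence and path monotonicity). The only point that requires a moment of care is the last step of extending integer-time bounds to arbitrary $t\geq 1$, and this is handled precisely by the non-decreasing nature of the paths, a feature specific to subordinators that we exploit crucially.
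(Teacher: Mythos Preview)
Your upper-bound argument is correct and matches the paper's exactly, but you have only proved half of the lemma. ``Of polynomial order $\kappa$'' here means a two-sided estimate $c_1 t^\kappa \leq \mathbb{E}[Z_t^\kappa]\leq c_2 t^\kappa$ for $t\geq 1$; the paper's proof establishes both directions, and the lower bound is not decorative --- it is exactly what is used in Proposition~\ref{lem-momentsubordination} to turn finiteness of $\int_{[1,\infty)} \mathbb{E}[Z_s^\kappa]\,\nu_Y(\diff s)$ into an \emph{if and only if} with $\int_{[1,\infty)} s^\kappa\,\nu_Y(\diff s)$, and again (in the case $\mathbb{E}[Z_1]=\infty$) to salvage the one-sided implication.

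The missing lower bounds are obtained by the mirror of your argument. For $\kappa\geq 1$, convexity of $x\mapsto x^\kappa$ and Jensen give $\mathbb{E}[Z_t^\kappa]\geq (\mathbb{E}[Z_t])^\kappa = t^\kappa\,\mathbb{E}[Z_1]^\kappa$. For $\kappa\in(0,1)$, concavity applied to the arithmetic mean yields the reverse of your convexity inequality, namely $(a_1+\cdots+a_n)^\kappa \geq n^{\kappa-1}(a_1^\kappa+\cdots+a_n^\kappa)$; setting $n=\lfloor t\rfloor$ and using $Z_t\geq Z_n$ then gives $\mathbb{E}[Z_t^\kappa]\geq n^\kappa\,\mathbb{E}[Z_1^\kappa]\geq (t/2)^\kappa\,\mathbb{E}[Z_1^\kappa]$ for $t\geq 1$. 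Note in particular that this lower bound for $\kappa<1$ does \emph{not} use the hypothesis $\mathbb{E}[Z_1]<\infty$, which is precisely why it survives in the second part of Proposition~\ref{lem-momentsubordination}.
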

\begin{proof}
First note that by Hölder's inequality for all $n\in \NN$, $a_1,...,a_n\geq 0$ and $r\geq 1$ 
	\begin{equation} \label{eq_polyhelper}
		(a_1+...+a_n)^r \leq n^{r-1}\cdot \left( a_1^r + ...+ a_n^r \right),
	\end{equation} 
while for $r\leq 1$ inequality \eqref{eq_polyhelper} holds with `` $\geq$'' instead of `` $\leq$''. \\
Let $\varphi$ be the Laplace exponent of the subordinator $(Z_t)_{t\geq 0}$. By \cite[Cor. 25.8]{sato2nd} finiteness of $\EE[Z_1^\kappa]$ for some $\kappa>0$ implies finiteness of $\EE[Z_t^\kappa]$ for all $t\geq 0$.\\
	By our assumptions, $\mathbb{E}[Z_1] = \varphi'(0+)\in(0,\infty)$ and it follows that, cf. \cite[Ex. 25.12]{sato2nd},
	\begin{equation*}
		\mathbb{E}[Z_t] = 
		t \cdot \varphi'(0+) =t\cdot \mathbb{E}[Z_1].
	\end{equation*}
	Let now $\kappa \geq 1$. Then by Jensen's inequality we conclude that
	\begin{equation*}
		\mathbb{E}[Z_t^\kappa] \geq  \mathbb{E}[Z_t]^\kappa  = t^\kappa \cdot \mathbb{E}[Z_1]^\kappa,
	\end{equation*} which yields a lower bound of degree $\kappa$. 
	In order to show an upper bound 
	set $n:=\lceil t\rceil$ such that $t/n =:c_n\in [\tfrac12 ,1]$ and let $\xi_i$ be i.i.d. copies of $Z_1$. Then, due to the infinite divisibility and monotonicity of $Z$, it holds that
	\begin{equation}\label{eq_proof_polynomial}
		\begin{aligned} 
			\mathbb{E}[Z_t^\kappa] &\leq  \mathbb{E}[Z_n^\kappa] = \mathbb{E}\Big[ \Big( \sum_{i=1}^{n} \xi_i \Big)^\kappa   \Big]  \\ 
			&\leq \mathbb{E}\Big[ n^{\kappa-1} \cdot \sum_{i=1}^{n} \xi_i^\kappa   \Big] 
			= n^\kappa \cdot \mathbb{E}[\xi_1^\kappa] = t^\kappa \cdot c_n^{-\kappa} \cdot \mathbb{E}[Z_1^\kappa] \leq t^\kappa \cdot 2^{\kappa} \cdot \mathbb{E}[Z_1^\kappa],
		\end{aligned}
	\end{equation}
	where we used \eqref{eq_polyhelper} for the second inequality.  \\ 
	To prove the statement for $\kappa\in(0,1)$ note that $({}\cdot{})^\kappa$ is concave. Hence, Jensen's inequality yields an upper bound in this case. The lower bound follows analogously to \eqref{eq_proof_polynomial}, setting $n:=\lfloor t\rfloor$, applying the variant of  \eqref{eq_polyhelper} for $r\leq 1$.
	\end{proof}

\begin{proof}[Proof of Proposition \ref{lem-momentsubordination}]
	We write $\nu_Z$, $b_Z$ for the L\'evy measure and drift of $Z$, respectively, and likewise $\nu_Y$, $b_Y$ for L\'evy measure  and drift of $Y$. Then as $(Z_{Y_t})_{t\geq 0}$ is a (killed) subordinator with L\'evy measure $\nu$, say, $\EE[Z_{Y_1}^\kappa] <\infty$ is equivalent to, cf. \cite[Cor. 25.8]{sato2nd},
	\begin{equation}\label{eq-kappamomenttausub}
		\int_{[1,\infty)} z^\kappa \nu(\diff z)<\infty.
	\end{equation}
	where the L\'evy measure $\nu$ of the subordinated process is given by, cf. \cite[Thm. 30.1]{sato2nd},
	$$\nu(B)= b_Y \nu_Z(B) + \int_{(0,\infty)} \mu^s(B) \nu_Y (\diff s),$$
	for any Borel set $B$ in $(0,\infty)$, where $\mu=\cL(Z_1)$ denotes the  distribution of $Z_1$. Thus
	\begin{align}
		\int_{[1,\infty)} z^\kappa \nu(\diff z) &= b_Y \int_{[1,\infty)} z^\kappa \nu_Z(\diff z) + \int_{[1,\infty)} z^\kappa \diff \left(\int_{(0,\infty)}  \mu^s (z) \nu_Y (\diff s)\right)  \nonumber \\
		&= b_Y \int_{[1,\infty)} z^\kappa \nu_Z(\diff z) + \int_{(0,\infty)}  \int_{[1,\infty)} z^{\kappa} \mu^s (\diff z) \nu_Y (\diff s) \label{eq_subordjumps}
	\end{align}
	where all terms are non-negative and hence the appearing sum is finite if and only if both summands are finite. From \cite[Cor. 25.8]{sato2nd} we know that $\int_{[1,\infty)} z^\kappa \nu_Z(\diff z)<\infty$ if and only if $\EE[Z_1^\kappa]<\infty$ if and only if $\EE[Z_s^\kappa]<\infty$ for all $s\geq 0$. Thus assume $\int_{[1,\infty)} z^\kappa \nu_Z(\diff z)<\infty$ from now on, which implies $\int_{[1,\infty)} z^{\kappa} \mu^s(\diff z) = \EE[\mathds{1}_{\{Z_s \geq 1\}} Z_s^\kappa] <\infty$.  
	Furthermore 
	\begin{equation} \label{eq_proof_PropSubord-Subordinator_1}
		\begin{aligned}
			&\int_{(0,\infty)}  \int_{[1,\infty)} z^{\kappa} \mu^s (\diff z) \nu_Y (\diff s) 
			= \int_{(0,\infty)} \mathbb{E}[\mathds{1}_{\{Z_s\geq 1\}} Z_s^\kappa ] \nu_Y (\diff s) \\ 
			&=   \int_{(0,1)} \mathbb{E}[\mathds{1}_{\{Z_s\geq 1\}} Z_s^\kappa ] \nu_Y (\diff s) + \int_{[1,\infty)} \mathbb{E}[ Z_s^\kappa ] \nu_Y (\diff s) -\int_{[1,\infty)} \underbrace{\mathbb{E}[\mathds{1}_{\{Z_s<1 \}} Z_s^\kappa ]}_{\in [0,1)} \nu_Y (\diff s),
		\end{aligned}
	\end{equation}
	where the left-hand side of the equation is finite if and only if the right-hand side is finite. Hereby, the last integral, as well as the sum of all three, is non-negative.\\ 
Consider the first integral. It holds that
	\begin{align*}
		\mathbb{E}[\mathds{1}_{\{Z_s\geq 1\}} Z_s^\kappa] = \mathbb{P}(Z_s\geq 1) \cdot \mathbb{E}\left[Z_s^\kappa \big| Z_s\geq 1\right],
	\end{align*}
	where  $\mathbb{E}[Z_s^\kappa | Z_s\geq 1]=:C_1(s)<\infty$, $s\in[0,\infty),$  since we assumed $\mathbb{E}[Z_s^\kappa]<\infty$. 
	From \cite[Lemma 30.3]{sato2nd} it follows that $\mathbb{P}(Z_s\geq 1)\leq C_2 s$ for some $C_2\in (0,\infty)$. Thus, setting $C_1:=\sup_{s\in(0,1)}C_1(s)<\infty$, 
	\begin{align*}
		\int_{(0,1)} \mathbb{E}[\mathds{1}_{\{Z_s\geq 1\}} Z_s^\kappa ] \nu_Y (\diff s)\leq C_1C_2 \int_{(0,1)} s \nu_Y (\diff s), 
	\end{align*} is finite because $Y$ is a subordinator which implies $\int_{(0,\infty)} (1\wedge y)\nu_Y(\diff y)<\infty$. \\
	For the second integral note that by Lemma \ref{Lemma_taux+_polynomial} the mapping $s\mapsto \EE[Z_s^\kappa]$ is of polynomial order $\kappa$ for all $s\geq 1$. 
	Thus it follows that the second summand in \eqref{eq_proof_PropSubord-Subordinator_1} and hence also the second summand
	in \eqref{eq_subordjumps} is finite if and only if 
	$\int_{[1,\infty)}  s^\kappa \nu_Y(\diff s)<\infty$ and $\int_{[1,\infty)} z^\kappa \nu_Z(\diff z)<\infty$. This finishes the proof of the claimed equivalence.\\
	In the case $\EE[Z_1]=\infty$ and $\kappa \in(0,1)$ we can not apply Lemma \ref{Lemma_taux+_polynomial} to find a necessary and sufficient condition for finiteness of the second summand in \eqref{eq_proof_PropSubord-Subordinator_1}. However, an inspection of the proof of Lemma \ref{Lemma_taux+_polynomial} shows that even in this case the mapping  $s\mapsto \EE[Z_s^\kappa]$ can be bounded from below by a function of polynomial order $\kappa$ for all $s\geq 1$. Thus finiteness of the second summand in \eqref{eq_proof_PropSubord-Subordinator_1} still implies $\int_{[1,\infty)} z^\kappa \nu_Z(\diff z)<\infty$, and  finiteness of all summands in \eqref{eq_subordjumps} implies $\int_{[1,\infty)}  s^\kappa \nu_Y(\diff s)<\infty$ and $\int_{[1,\infty)} z^\kappa \nu_Z(\diff z)<\infty$ as claimed.
\end{proof}

Let us now concentrate on the case $\psi'(0+)>0$ treated in Theorem \ref{thm-existence}(ii), where in the light of \eqref{eq_tau-_tau+} it remains to be proven that $\EE[(\tau_{Y_1}^+)^\kappa]<\infty$ is equivalent to $\int_{[1,\infty)} y^{\kappa+1} \Pi (\diff y)<\infty$. To show this we need the following 
useful \normal connection between the existence of integer moments of  $\tau^+_1$ and $X_1$.

\begin{lemma} \label{Lemma_psiphin}
	Assume that $\psi'(0+)> 0$. Then for all $k\in\NN_0$ 
	\begin{equation} \label{eq_proof_conj_psiphin}
		\lim_{q\downarrow 0 }\abs{ \Phi^{(k)}(q)} < \infty \qquad \text{if and only if} \qquad \lim_{q\downarrow 0}\abs{\psi^{(k)}(q)}<\infty.
	\end{equation}
\end{lemma}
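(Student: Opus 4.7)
My approach is to use the inverse-function identity $\psi(\Phi(q)) = q$, differentiate it repeatedly via Faà di Bruno's formula, and induct on $k$ to transfer finiteness of the limits at $q = 0+$ between the $\psi$- and $\Phi$-sides.

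The base cases $k = 0, 1$ are immediate: under $\psi'(0+) > 0$ we have $\Phi(0) = 0$, and \eqref{Lemma_derivative_inverse} gives $\Phi'(0+) = 1/\psi'(0+) \in (0, \infty)$, so both sides of \eqref{eq_proof_conj_psiphin} are finite without any further condition. For $k \geq 2$, repeated differentiation of $\psi(\Phi(q)) = q$ produces an identity of the form
\[
\psi'(\Phi(q)) \, \Phi^{(k)}(q) + \bigl(\Phi'(q)\bigr)^k \, \psi^{(k)}(\Phi(q)) + R_k(q) = 0, \qquad q > 0,
\]
where $R_k(q)$ is a polynomial in $\{\Phi^{(l)}(q)\}_{1 \leq l \leq k-1}$ and $\{\psi^{(j)}(\Phi(q))\}_{2 \leq j \leq k-1}$. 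The analogous identity, obtained by differentiating $\Phi(\psi(\theta)) = \theta$, holds with the roles of $\psi$ and $\Phi$ interchanged.

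I would then prove by induction on $k$ the joint statement \emph{``$|\psi^{(j)}(0+)| < \infty$ for every $j \leq k$ if and only if $|\Phi^{(j)}(0+)| < \infty$ for every $j \leq k$''}. For the step $k-1 \to k$, let $q \downarrow 0$ in the above identity: $\Phi(q) \to 0$; by the inductive hypothesis $R_k(q)$ converges to a finite limit; and the coefficients $\psi'(\Phi(q)) \to \psi'(0+) \in (0, \infty)$ and $(\Phi'(q))^k \to [\psi'(0+)]^{-k}$ are finite and nonzero. Hence $\Phi^{(k)}(0+)$ is finite if and only if $\psi^{(k)}(0+)$ is, and the symmetric identity yields the reverse direction. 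The pointwise equivalence \eqref{eq_proof_conj_psiphin} then follows from two monotonicity observations: on the $\psi$-side, $|\psi^{(k)}(0+)| < \infty$ implies $|\psi^{(j)}(0+)| < \infty$ for every $j \leq k$, as one reads off from \eqref{Lemma_equivalence_momentenBedingungen} together with $\psi^{(k)}(\theta) = \sigma^2 \mathds{1}_{\{k=2\}} + (-1)^k \int_0^\infty y^k e^{-\theta y}\, \Pi(\diff y)$ for $k \geq 2$; on the $\Phi$-side, the same monotonicity follows from the identification of $\Phi$ as the Laplace exponent of the subordinator $(\tau_x^+)_{x \geq 0}$, which gives $|\Phi^{(k)}(0+)| < \infty \Leftrightarrow \EE[(\tau_1^+)^k] < \infty$.

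The main obstacle is the Faà di Bruno bookkeeping: one must verify that the top-order derivative $\psi^{(k)}$ enters the identity linearly, with the nonzero coefficient $(\Phi'(q))^k$, and that every other contribution is packaged into $R_k$ as a polynomial in strictly lower-order derivatives. Once this algebraic structure is secured, the inductive closure of finiteness under $q \downarrow 0$ is routine.
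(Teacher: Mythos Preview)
Your proposal is correct and follows essentially the same route as the paper: differentiate $\psi(\Phi(q))=q$ via Fa\`a di Bruno, isolate the two top-order contributions $\psi'(\Phi(q))\,\Phi^{(k)}(q)$ and $(\Phi'(q))^k\,\psi^{(k)}(\Phi(q))$, and induct. The paper does exactly this (writing the identity with Bell polynomials $B_{n,k}$), and, like you, it reduces to the case where all lower-order limits are already finite by invoking the same monotonicity-in-$k$ of moment finiteness; your justification of that monotonicity via \eqref{Lemma_equivalence_momentenBedingungen} on the $\psi$-side and the subordinator interpretation of $\Phi$ on the other is in fact slightly more explicit than the paper's one-line remark. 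The only superfluous piece is the ``symmetric identity'' from $\Phi(\psi(\theta))=\theta$: since both top-order coefficients in your displayed relation are finite and nonzero at $q=0+$, the single identity already delivers the biconditional, so the dual computation is not needed.
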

\begin{proof}
	We prove the statement by induction.	
	Clearly, for $k=0$ there is nothing to show. For $k=1$ it follows from the assumption $\psi'(0+)> 0$ and the fact that $(X_t)_{t\geq 0}$ is spectrally negative, that $\psi'(0+)\in(0,\infty)$. By \eqref{Lemma_derivative_inverse} we thus conclude that $\Phi'(0+)=1/\psi'(0+)\in(0,\infty)$ and the equivalence is trivially fulfilled.  Further, for $k=2$ we compute via \eqref{Lemma_derivative_inverse} 
	\begin{align*}
		\Phi''(q)&= \partial_q\left( \frac{1}{\psi'(\Phi(q))}\right) = - \frac{\psi''(\Phi(q))}{\psi'(\Phi(q))^3}, \quad q>0,
	\end{align*} 
	such that 
	$$\Phi''(0+) = - \frac{\psi''(0+)}{\psi'(0+)^3}$$
	which proves the claim for $k=2$. \\
	Assume now that \eqref{eq_proof_conj_psiphin} holds for all $\ell=1,...,n-1$. If there exists $\ell'\in\{1,...,n-1\}$ such that both sides of \eqref{eq_proof_conj_psiphin} are infinite, then for all $\ell\in\{\ell',...,n-1\}$ both terms are infinite as well. Therefore we assume that both sides are finite for all $\ell=1,...,n-1$. \\
	By definition of $\Phi$ we have $\psi(\Phi(q))=q$ for all $q\geq 0$ and hence $\partial_q^n \psi(\Phi(q))=0$ for all $n\geq 2$. Using Faà di Bruno's formula, cf. \cite[Equation (2.2)]{Johnson2002}, for $n\geq 2$ we therefore conclude that
	\begin{align*}
		0 &= \sum_{k=1}^n \psi^{(k)}(\Phi(q)) \cdot B_{n,k}(\Phi'(q),...,\Phi^{(n-k+1)}(q)),
	\end{align*}
	where the functions $B_{n,k}$ denote the partial Bell polynomials. Thus we get 
	\begin{align*}
		\Phi^{(n)}(q) = B_{n,1}(\Phi^{(n)}(q)) &= \frac{-1}{\psi'(\Phi(q)}\cdot \sum_{k=2}^n \psi^{(k)}(\Phi(q)) \cdot B_{n,k}\left(\Phi'(q),...,\Phi^{(n-k+1)}(q)\right) \\
		&= \frac{-1}{\psi'(\Phi(q))}\cdot \sum_{j=1}^{n-1} \psi^{(n+1-j)}(\Phi(q)) \cdot B_{n,n+1-j}\left(\Phi'(q),...,\Phi^{(j)}(q)\right),
	\end{align*}
	where the left-hand side is finite if and only if the right-hand side is finite. However, the right-hand side is finite in the limit $q\downarrow 0$ if and only if 
	\begin{align*}
		\lim_{q\downarrow 0} \abs{\frac{1}{\psi(\Phi(q))} \cdot \psi^{(n)}(\Phi(q)) \cdot B_{n,n}(\Phi'(q))} & = 		\lim_{q\downarrow 0} \abs{\frac{1}{\psi(\Phi(q))} \cdot \psi^{(n)}(\Phi(q)) \cdot \Phi'(q)^n} \\ & = \abs{\frac{\psi^{(n)}(0+)}{\psi'(0+)^{n+1}}}<\infty,
	\end{align*}
	since all other summands are finite in the limit $q\downarrow 0$ by assumption. 
\end{proof}

 We are now in the position to present the proof of part (ii) of Theorem \ref{thm-existence}.

\begin{proof}[Proof of Theorem \ref{thm-existence}(ii)]
	Assume $\psi'(0+)> 0$. 
	By using Lemma \ref{Lemma_k-thmoment_eta},  Proposition \ref{Prop-etaalsLE} and \eqref{eq-Wolfemoment} we see immediately that for all $\kappa>0$
	\begin{equation*}
		\mathbb{E}\left[(\tau_x^-)^\kappa|\tau_x^-<\infty\right] <\infty \qquad \text{if and only if} \qquad  \EE[(\tau_{Y_1}^+)^\kappa] <\infty. 
	\end{equation*}
Further applying Proposition \ref{lem-momentsubordination} it follows that 
	\begin{equation} \label{eq-momentssubordinator} \EE[(\tau_{Y_1}^+)^\kappa] <\infty  \qquad \text{if and only if} \qquad \left[ \EE[(\tau_1^+)^\kappa] <\infty  \text{ and } \EE[Y_1^\kappa ]<\infty \right],\end{equation} 
since $\mathbb{E}[\tau_1^+]= \Phi'(0+)=1/\psi'(0+)<\infty$ as noted in the proof of Lemma \ref{Lemma_psiphin}.  Furthermore, $\EE[Y_1^\kappa ]<\infty $ is equivalent to finiteness of
	\begin{align}\label{eq-proof-partialintegration}
		\int_{[1,\infty)} y^{\kappa} \Pi((y,\infty)) \diff y &= \frac{1}{\kappa+1} \int_{[1,\infty)} y^{\kappa+1} \Pi(\diff x) -  \frac{1}{\kappa+1}\Pi((1,\infty))
	\end{align}
by partial integration. Thus 
$$\EE[Y_1^\kappa ]<\infty \qquad \text{if and only if} \qquad  \EE[|X_1|^{\kappa+1} ]<\infty$$
such that $\EE[|X_1|^{\kappa+1} ]<\infty$ is shown to be a necessary condition for $\mathbb{E}\left[(\tau_x^-)^\kappa|\tau_x^-<\infty\right] <\infty$. However, $\mathbb{E}[|X_1|^{\kappa+1}]< \infty$ is a sufficient condition as well, since it implies $\mathbb{E}[|X_1|^{k}]<\infty$ for $k=\lfloor \kappa +1 \rfloor\geq 1$. This in turn implies  $\mathbb{E}[(\tau^+_1)^{k}]<\infty$ by \eqref{eq-momentLaplace} and Lemma \ref{Lemma_psiphin}, which then yields $\mathbb{E}[(\tau^+_1)^{\kappa}]<\infty$, since $\kappa<k$. Thus, both conditions on the right-hand side of \eqref{eq-momentssubordinator} hold if and only if $\EE[|X_1|^{\kappa+1} ]<\infty$ which finishes the proof.
\end{proof}

 Finally, we consider the oscillating case of $\psi'(0+)=0$. Again, in the light of \eqref{eq_tau-_tau+} we need to investigate the existence of $\EE[(\tau_{Y_1}^+)^\kappa]$, where this time we restrict ourselves on finding conditions for $\EE[(\tau_{Y_1}^+)^\kappa]=\infty$. 

\begin{proof}[Proof of Theorem \ref{thm-existence} (iii)]
Assume that $\psi'(0+)= 0$. \\  
(a), $\kappa\in(0,1]$: From \eqref{eq_tau-_tau+} we have
$$	\mathbb{E}\left[(\tau_x^-)^\kappa\right] =\infty \qquad \text{if and only if} \qquad  \EE[(\tau_{Y_1}^+)^\kappa] =\infty,$$
and by Proposition \ref{lem-momentsubordination} for $\kappa \in (0,1)$ the latter follows in particular if  $\EE[Y_{1}^\kappa]=\infty$. 
This, however, is equivalent to $\int_{[1,\infty)} y^{\kappa} \Pi((y,\infty)) \diff y=\infty$ and via \eqref{eq-proof-partialintegration} it is furthermore equivalent to $\int_{[1,\infty)} y^{\kappa+1} \Pi(\diff y)= \infty$. \\
Consider now the case $\kappa = 1$, i.e. $\EE[Y_{1}]=\infty$. From \eqref{Lemma_derivative_inverse} it follows that $\Phi'(0+)=\mathbb{E}[\tau_1^+]=\infty$, and an inspection of the proof of Proposition \ref{lem-momentsubordination} reveals that in this setting also $\EE[\tau_{Y_1}^+] =\infty$. This again implies the statement.\\
(b) We consider a fixed $\kappa\in (\frac12,1)$ and prove \eqref{eq_kappa-thmoment_infinite} for the chosen $\kappa$. This will immediately imply the statement also for any $\kappa\geq 1$.\\
As before, from \eqref{eq_tau-_tau+} we have
		$$	\mathbb{E}\left[(\tau_x^-)^\kappa\right] =\infty \qquad \text{if and only if} \qquad  \EE[(\tau_{Y_1}^+)^\kappa] =\infty,$$
		where by Proposition \ref{lem-momentsubordination} the latter follows if  $\EE[(\tau_{1}^+)^\kappa]=\infty$.  
		Here, by \eqref{eq_Laplace_taux+}, \eqref{eq-Wolfemoment} and \eqref{eq_definition_marchaud_derivative}, 
	\begin{align}
		\mathbb{E}[(\tau_1^+)^\kappa]& = \left[ \mathbf{D}_q^\kappa e^{-\Phi(q)}\right]_{q=0} \nonumber \\
		&=  \left[\frac{\kappa}{\Gamma(1-\kappa)} \int_q^\infty \frac{e^{-\Phi(q)}- e^{-\Phi(u)}}{u^{\kappa+1}} \diff u  \right]_{q=0} \nonumber \\
		&=  \frac{\kappa}{\Gamma(1-\kappa)} \int_0^\infty \frac{1- e^{-\Phi(u)}}{u^{\kappa+1}} \diff u,\label{eq_proof_mainthm_iii2}
	\end{align}
where the left-hand side is finite if and only if the right-hand side is finite.\\
As  $\Phi$ is monotonically increasing with $\Phi(0)=0$ and  $\Phi(u)\overset{u\to\infty}{\longrightarrow}\infty$ we clearly have for all $\varepsilon>0$  
	\begin{equation*}
		\int_\varepsilon^\infty \frac{1-e^{-\Phi(u)}}{u^{\kappa+1}} \diff u \leq \int_\varepsilon^\infty \frac{1}{u^{\kappa+1}}\diff u <\infty.
	\end{equation*} 
Thus by \eqref{eq_proof_mainthm_iii2} , we have
	\begin{align} \label{eq_proof_mainthmiii3}
	\mathbb{E}[(\tau_1^+)^\kappa]  <\infty  \quad \text{ if and only if }\quad  \int_0^\varepsilon \frac{1-e^{-\Phi(u)}}{u^{\kappa+1}}<\infty \text{ for some }\varepsilon>0.
\end{align}
By Taylor's expansion, as $u\downarrow 0$, the term $1-e^{-\Phi(u)}$ is of the same order as $u \Phi'(u) e^{-\Phi(u)}$. Moreover, by \eqref{Lemma_derivative_inverse},
	\begin{equation*} 
	\lim_{u\downarrow 0} \frac{u \Phi'(u)}{u^{\kappa}} = \lim_{u\downarrow 0} \frac{\Phi'(u)}{u^{\kappa-1}} = \lim_{u\downarrow 0} \frac{u^{1-\kappa}}{\psi'(\Phi(u))}.
\end{equation*} 
Recall that $\kappa\in(\frac12 ,1)$ and $\psi''(0+)<\infty$. By a twofold application of l'Hospital's rule we get
\begin{equation*} 
	\begin{aligned}
		\lim_{u\downarrow 0} \frac{u^{1-\kappa}}{\psi'(\Phi(u))} &= \lim_{u\downarrow 0} \frac{(1-\kappa)\cdot u^{-\kappa}}{\psi''(\Phi(u))\cdot\Phi'(u)} 
		=\frac{(1-\kappa)}{\psi''(0+)} \cdot \lim_{u\downarrow 0} \frac{\psi'(\Phi(u))}{u^\kappa} \\
		&=\frac{(1-\kappa)}{\psi''(0+)} \cdot  \lim_{u\downarrow 0} \frac{\psi''(\Phi(u))\cdot\Phi'(u)}{\kappa \cdot u^{\kappa-1}}
		=\frac{(1-\kappa)}{\kappa} \cdot  \lim_{u\downarrow 0} \frac{u^{1-\kappa}}{\psi'(\Phi(u))}.
	\end{aligned} 
\end{equation*}
As $\kappa\neq \frac12$ this can only be true if
\begin{equation} \label{eq_Phi_behaviour2}
	\lim_{u\downarrow 0} \frac{u^{1-\kappa}}{\psi'(\Phi(u))} =  \lim_{u\downarrow 0} \frac{\psi'(\Phi(u))}{u^\kappa}  = \text{ either }0 \text{ or }\infty,
\end{equation}
which in turn implies  
\begin{equation*} 
	\lim_{u\downarrow 0} \frac{u\Phi'(u)}{u^\kappa} = \lim_{u\downarrow 0} \frac{u^{1-\kappa}}{\psi'(\Phi(u))} \cdot \frac{\psi'(\Phi(u))}{u^\kappa} = \lim_{u\downarrow 0} u^{1-2\kappa} = \infty. 
\end{equation*}
Thus also 
$$\lim_{u\downarrow 0} \frac{1-e^{-\Phi(u)}}{u^{\kappa}} = \lim_{u\downarrow 0} \frac{u\Phi'(u)e^{-\Phi(u)}}{u^{\kappa}} =\infty,$$
and in particular for any $C>0$ there exists $u_0>0$ such that $\frac{1-e^{-\Phi(u)}}{u^{\kappa}}>C$ for all $u<u_0$.
Hence 
	\begin{equation*}
	\int_0^\varepsilon \frac{1-e^{-\Phi(u)}}{u^{\kappa+1}} \diff u \geq \int_0^{u_0\wedge \varepsilon} \frac{1-e^{-\Phi(u)}}{u^{\kappa+1}} \diff u \geq \int_0^{u_0\wedge \varepsilon} \frac{C\cdot u^\kappa}{u^{\kappa+1}} = C\cdot\int_0^{u_0\wedge \varepsilon} \frac{1}{u} \diff u = \infty. 
\end{equation*}
By \eqref{eq_proof_mainthmiii3} this implies $\EE[(\tau_1^+)^\kappa]=\infty$ and thus the statement. 

Lastly, note that \eqref{eq_kappa-thmoment_infinite} for all $x>0$, $\kappa \geq 1$ is a direct consequence of (a) and (b), as either $\psi''(0+)<\infty$ in which case we can apply (b), or $\psi''(0+)=\infty$, which is equivalent to $\int_{[1,\infty)} y^2 \Pi(\diff y) = \infty$ and hence $\kappa^*=1$ is a possible choice in (a).
\end{proof}

\section*{Acknowledgements}
We would like to thank the reviewers for their helpful and constructive comments that helped us to improve this manuscript.

\small
\bibliography{literatureTheoryTTR}

\begin{thebibliography}{10}

\bibitem{Avram2020}
F.~Avram, D.~Grahovac, and C.~Vardar-Acar.
\newblock The {W}, {Z} scale functions kit for first passage problems of spectrally negative {L}\'evy processes, and applications to control problems.
\newblock {\em ESAIM: Probability and Statistics}, 24:454--525, 2020.

\bibitem{Delbaen1990}
F.~Delbaen.
\newblock A remark on the moments of ruin time in classical risk theory.
\newblock {\em Insurance: Mathematics and Economics}, 9(2-3):121--126, 1990.

\bibitem{Doney1991}
R.A. Doney.
\newblock Hitting probabilities for spectrally positive {L}\'evy processes.
\newblock {\em J. London Math. Soc.}, 44:566--576, 1991.

\bibitem{DoneyBuch}
R.A. Doney.
\newblock {\em Fluctuation Theory for {L}\'evy processes}, volume 1897 of {\em Lecture Notes in Mathematics}.
\newblock Springer, 2007.

\bibitem{Johnson2002}
W.P. Johnson.
\newblock The curious history of {F}a\`a di {B}runo's formula.
\newblock {\em The American Mathematical Monthly}, 109(3):217--234, 2002.

\bibitem{KuznetsovKyprianou2014}
A.~Kuznetsov, A.E. Kyprianou, J.C. Pardo, and A.R. Watson.
\newblock The hitting time of zero for a stable process.
\newblock {\em Electron. J. Probab.}, 19:1--26, 2014.

\bibitem{kuznetsov2011}
A.~Kuznetsov, A.E. Kyprianou, and V.~Rivero.
\newblock The theory of scale functions for spectrally negative {L}\'evy processes.
\newblock In {\em L\'evy Matters II}, volume 2061 of {\em Springer Lecture Notes in Mathematics}, pages 97--186. 2013.

\bibitem{Kyprianou2008}
A.~Kyprianou and V.~Rivero.
\newblock Special, conjugate and complete scale functions for spectrally negative {L}{\'{e}}vy processes.
\newblock {\em Electron. J. Probab.}, 13:1672--1701, 2008.

\bibitem{Kyprianou2014}
A.E. Kyprianou.
\newblock {\em Fluctuations of {L}\'evy processes with {A}pplications}.
\newblock Springer, 2nd edition, 2014.

\bibitem{Peskir2008}
G.~Peskir.
\newblock The law of the hitting times to points by a stable {L}\'evy process with no negative jumps.
\newblock {\em Electron. Commun. Probab.}, 13:653--659, 2008.

\bibitem{rogerswilliams1}
L.C.G. Rogers and D.~Williams.
\newblock {\em Diffusions, {M}arkov {P}rocesses and {M}artingales. Volume 1: {F}oundations}.
\newblock Cambridge University Press, 2nd edition, 2000.

\bibitem{rudincomplex}
W.~Rudin.
\newblock {\em Real and Complex Analysis}.
\newblock McGraw-Hill, 3rd edition, 1987.

\bibitem{Samko1993}
S.G. Samko, A.A. Kilbas, and O.I. Marichev.
\newblock {\em Fractional Integrals and Derivatives - Theory and Applications}.
\newblock Gordon and Breach Science Publishers, 1993.

\bibitem{sato2nd}
K.~Sato.
\newblock {\em L\'evy {P}rocesses and {I}nfinitely {D}ivisible {D}istributions}.
\newblock Cambridge University Press, 2nd edition, 2013.

\bibitem{rene-book}
R.L. Schilling, R.~Song, and Z.~Vondracek.
\newblock {\em Bernstein Functions. {T}heory and {A}pplications}.
\newblock De Gruyter, 2nd edition, 2012.

\bibitem{Simon2011}
T.~Simon.
\newblock Hitting densities for spectrally positive stable processes.
\newblock {\em Stochastics}, 83:203-- 214, 2011.

\bibitem{Uchaikin2013}
V.V. Uchaikin.
\newblock {\em Fractional Derivatives for Physicists and Engineers: Volume I: Background and Theory. Volume II: Applications}.
\newblock Springer, 2013.

\bibitem{Wolfe-frac}
S.J. Wolfe.
\newblock On moments of probability distribution functions.
\newblock In B.~Ross, editor, {\em Fractional Calculus and Its Applications}, volume 457 of {\em Lecture Notes in Mathematics}, pages 306--316. Springer, 1975.

\end{thebibliography}

\end{document}